\def\NAT@def@citea{\def\@citea{\NAT@separator}}
\theoremstyle{plain}
\newtheorem{theorem}{Theorem}[section]
\newtheorem{lemma}[theorem]{Lemma}
\crefname{enumi}{\unskip}{\unskip}
\theoremstyle{definition}
\newtheorem{example}[theorem]{Example}
\newtheorem{remark}[theorem]{Remark}
\begin{document}

\title[$(k+1)$-potent Matrices]{$(k+1)$-potent Matrices in triangular matrix Groups and Incidence Algebras of Finite Posets}

\author{Ivan Gargate}
\address{UTFPR, Campus Pato Branco, Rua Via do Conhecimento km 01, 85503-390 Pato Branco, PR, Brazil}
\email{ivangargate@utfpr.edu.br}

\author{Michael Gargate}
\address{UTFPR, Campus Pato Branco, Rua Via do Conhecimento km 01, 85503-390 Pato Branco, PR, Brazil}
\email{michaelgargate@utfpr.edu.br}

\begin{abstract}
Let $\mathbb{K}$ be a field such that  $char(\mathbb{K})\nmid k$ and $char(\mathbb{K})\nmid k+1$.  We describe all $(k+1)$-potent matrices over the group of  upper triangular matrix. In the case that $\mathbb{K}$ is a finite field we show how to compute the number of these elements in triangular matrix groups and use this formula to compute the number of $(k+1)$-potent elements in the Incidence Algebra $\mathcal{I}(X,\mathbb{K})$ where $X$ is a finite poset.

\end{abstract}
\keywords{Upper triangular matrices, $(k+1)$-potent matrices, root unity }
\maketitle

\section{Introduction}
Let $G$ be a group. An element $g\in G$ is called $(k+1)$-potent if satisfies $g^{k+1}=g$ for $k\in \mathbb{N}, k\geq 1$. If $k=1$ the element $g$ is called of idempotent. In the case that $G$ is the group of matrices, there are several authors
who have works with idempotent elements , for instance, expressing various matrices as a difference, a sum and product of some idempotents, see \cite{Har, Har2, Slowik3, Wu, Bas, Bot, Erdo}. Recently Hou in \cite{Hou}  describe the necessary and sufficient conditions on the infinite and finite upper triangular matrices over a unitary ring to be idempotents with only zeros and ones into  the main diagonal. 

In the case of $k$-potent matrix the authors in \cite{Gargate} generalize the article of Wu \cite{Wu} and show the conditions by which  a complex matrix can expressed as a sum of $k$-potent matrices.
 
 In this article, we investigate how to construct $(k+1)$-potent matrices on upper triangular matrices over $\mathbb{K}$ generalizing the recent work of Hou \cite{Hou} on idempotent triangular matrices. Also, we described the necessary and sufficient conditions on infinite and finite upper triangular matrices over a unitary ring to be $(k+1)$-potent matrices with set elements $\{0, 1, \omega, \omega^2, \ldots, \omega^{k-1}\}$ on the diagonal, where $\omega$ is a $kth$ root of unity and we present a formula to compute all $(k+1)$-potent matrices in the case that $\mathbb{K}$ is a finite field. Furthermore,  we present various formulas to compute the number of $(k+1)$-potent elements on  the incidence algebra $\mathcal{I}(X,\mathbb{K})$ with $X$ be a finite poset. By the lemma 1.18 of Gubareni-Hazewinkel \cite{gub1} we focus only to compute the number of $(k+1)$-potent elements over some special posets: Star, Rhombuses and Y. With the combination of these specially posets we can generate any finite poset, but, an unique formula to compute the number of $(k+1)$-potent elements in general finite posets is complicated by the multiple forms of a finite poset. Fortunately in this article we shown an road to compute the number of these elements.

Our main result is the following Theorem:

\begin{theorem}\label{th1} Assume that $\mathbb{K}$ is a field such that $char(\mathbb{K}) \nmid k$ and $char(\mathbb{K})\nmid k+1$. Let $G$ be either the matrix
group $T_{\infty}( \mathbb{K})$ or $T_n(\mathbb{K})$ for some $n \in \mathbb{N}$. Then an element $A$ of the matrix group $G$  with $a_{ii}\in\{0,1,\omega,\omega^{2},\ldots,\omega^{k-1}\}$ where $\omega$ kth root of unity in $\mathbb{K}$ is an
$(k+1)$-potent matrix  if and only if A is described by the following statements.
\begin{itemize}
\item[(i)] For all $i$, $a_{ii}\in\{0, 1, \omega, \omega^2, \ldots, \omega^{k-1}\}$.
\item[(ii)] For $i < j$, if $a_{ii} = a_{jj}$, then $a_{ij}$ equals to
\begin{equation}\label{eqth}
a_{ij}=\!\!\left\{\!\!\!\begin{array}{lr} 
0\! &\!\text{if} \ \ \! j\!=\!i\!+\!1 \\

\!\!
 -\displaystyle\frac{1}{k}\left[\displaystyle\sum_{s=2}^{k+1}\sum_{i<i_1\leq \! \cdots \leq i_{s-1}<j}(k\!+\!2\!-\!s)(a_{ii})^ {k+1-s}a_{i,i_1}a_{i_1,i_2}\cdots a_{i_{s-1},j}\right]\!
&\!\text{if}\ \ \! j\! >\! i\! +\!1. 
 \end{array}\right.
\end{equation}
\item[(iii)] For $i < j$, if $a_{ii} \not = a_{jj}$, then $a_{ij}$ can be chosen arbitrarily.
\end{itemize}

\end{theorem}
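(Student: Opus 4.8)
The plan is to reduce the single matrix identity $A^{k+1}=A$ to its entries and to solve the resulting scalar equations by induction on the distance $d=j-i$ from the main diagonal. Writing $A=D+N$ with $D=\mathrm{diag}(a_{11},a_{22},\dots)$ and $N$ strictly upper triangular, the diagonal equations read $(A^{k+1})_{ii}=a_{ii}^{\,k+1}=a_{ii}$, and these are satisfied automatically by every $a_{ii}\in\{0,1,\omega,\dots,\omega^{k-1}\}$ since $\omega^{k}=1$; this is statement (i). The real content is therefore the off-diagonal analysis, which I would organize as a single induction that simultaneously produces the formula of (ii) and the freedom of (iii).

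For $i<j$ I would expand $(A^{k+1})_{ij}$ as a sum over chains $i=w_0\le w_1\le\cdots\le w_{k+1}=j$ of the products $a_{w_0w_1}a_{w_1w_2}\cdots a_{w_kw_{k+1}}$, and group the terms according to the number $r\ge 1$ of strict steps (the factors coming from $N$). The key observation is that exactly one group, $r=1$, is linear in $a_{ij}$, since a single strict step between $i$ and $j$ must be the full jump $i\to j$; collecting the diagonal factors on either side of that jump gives the coefficient $S_{ij}=\sum_{p+q=k}a_{ii}^{\,p}a_{jj}^{\,q}$, so that $(A^{k+1})_{ij}=S_{ij}\,a_{ij}+R_{ij}$. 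Every term of the remainder $R_{ij}$ (the groups $r\ge 2$) is a product of at least two strictly-upper entries, each spanning a width strictly smaller than $d$, so $R_{ij}$ depends only on entries already determined by the inductive hypothesis. The entrywise equation thus becomes the scalar linear equation $a_{ij}\,(1-S_{ij})=R_{ij}$.

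Next I would evaluate $S_{ij}$ using (i). If $a_{ii}\ne a_{jj}$, then, because $a_{ii}^{\,k+1}=a_{ii}$ and $a_{jj}^{\,k+1}=a_{jj}$, one gets $S_{ij}=\frac{a_{ii}^{\,k+1}-a_{jj}^{\,k+1}}{a_{ii}-a_{jj}}=1$, so the coefficient $1-S_{ij}$ vanishes; the equation then imposes nothing on $a_{ij}$ beyond the consistency condition $R_{ij}=0$, which I would verify is forced by the inductive hypothesis, yielding (iii). If instead $a_{ii}=a_{jj}$ is a $k$th root of unity, then $S_{ij}=(k+1)a_{ii}^{\,k}=k+1$, so $1-S_{ij}=-k$, invertible precisely because $char(\mathbb{K})\nmid k$, and solving gives $a_{ij}=-\tfrac1k R_{ij}$. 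For $j=i+1$ the remainder $R_{ij}$ is empty, giving $a_{ij}=0$, the first line of (ii).

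The main obstacle is the final identification of $-\tfrac1k R_{ij}$ with the explicit double sum in (ii). I would carry this out combinatorially: for a fixed set of distinct jump-vertices $i=u_0<u_1<\cdots<u_r=j$, a chain is determined by the numbers of diagonal stays $e_0,\dots,e_r\ge 0$ with $\sum_t e_t=k+1-r$. The stays at the interior vertices reproduce exactly the weakly increasing indices $i_1\le\cdots\le i_{s-1}$ of (ii), where $s$ counts the jumps together with the interior stays; the stays $e_0,e_r$ at the two endpoints contribute $a_{ii}^{\,e_0}a_{jj}^{\,e_r}=(a_{ii})^{k+1-s}$, using $a_{ii}=a_{jj}$, and summing over the $k+2-s$ ways to split $e_0+e_r=k+1-s$ produces the coefficient $(k+2-s)(a_{ii})^{k+1-s}$. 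The delicate points I expect are this endpoint-merging, which genuinely relies on $a_{ii}=a_{jj}$, and the stars-and-bars bookkeeping; I would also treat the degenerate subcase $a_{ii}=a_{jj}=0$ separately, since there $S_{ij}$ collapses and one must check the surviving $s=k+1$ terms by hand. The standing hypotheses $char(\mathbb{K})\nmid k$ and $char(\mathbb{K})\nmid k+1$ enter exactly to keep the scalar coefficients that arise in the induction invertible.
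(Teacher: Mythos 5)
Your skeleton is essentially the paper's: induction on the band distance $d=j-i$, splitting $(A^{k+1})_{ij}$ into the part linear in $a_{ij}$, whose coefficient is $S_{ij}=\sum_{p+q=k}a_{ii}^{\,p}a_{jj}^{\,q}$, plus a remainder $R_{ij}$ built only from entries of smaller span; the paper merely packages this expansion through Slowik's block-power lemma (its formulas (\ref{aa})--(\ref{xx})) instead of your entrywise chain expansion, and your stars-and-bars identification of $-\frac{1}{k}R_{ij}$ with the double sum in (ii) agrees with what the paper extracts from its equation (\ref{eqm}). The genuine gap is in the sufficiency direction when $a_{ii}\neq a_{jj}$: there $S_{ij}=1$, the equation $(A^{k+1})_{ij}=a_{ij}$ collapses to $R_{ij}=0$, and you merely assert that this ``is forced by the inductive hypothesis.'' That is not routine bookkeeping: $R_{ij}$ is a sum of many products of smaller-span entries, and nothing in the chain expansion alone makes it vanish. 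This is exactly where the paper does its one real piece of work: writing the span-$d$ submatrix in the block form (\ref{eq11}), with $a$, $u$, $b$ as there and $r=a_{ii}$, $t=a_{jj}$, one has
\[
R_{ij}\;=\;a\Bigl[\sum_{l=0}^{k-1}\Bigl(\sum_{p+q=k-1-l}r^{p}t^{q}\Bigr)u^{l}\Bigr]b
\;=\;(r-t)^{-1}\bigl(a'b-ab'\bigr),
\]
where $a'$, $b'$ are the corresponding blocks of $A(m-1,i)^{k+1}$ and $A(m-1,i+1)^{k+1}$; by the inductive hypothesis these two overlapping span-$(d-1)$ submatrices are $(k+1)$-potent, so $a'=a$, $b'=b$, and $R_{ij}=0$ (this is the paper's computation (\ref{eqmm})). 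Your proof needs this identity, or an equivalent regrouping of the chains with at least two strict steps through the two overlapping submatrices, and as written it is missing; without it the backward implication is unproved for every pair with $a_{ii}\neq a_{jj}$, i.e.\ for all of statement (iii).

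A separate remark, in your favor: your insistence on treating $a_{ii}=a_{jj}=0$ apart is not pedantry. In that case $S_{ij}=0$, so the linear coefficient is $1-S_{ij}=1$ rather than $-k$, and your equation yields $a_{ij}=R_{ij}=\sum_{i<i_1\le\cdots\le i_k<j}a_{i,i_1}a_{i_1,i_2}\cdots a_{i_k,j}$, i.e.\ $+1$ times the $s=k+1$ term, whereas the displayed formula (\ref{eqth}) asserts $-\frac{1}{k}$ times it. The discrepancy is real: the paper's passage to (\ref{eqm}) silently uses $a_{ii}^{k}=1$, which fails at $a_{ii}=0$, and the paper's own Table 1 (the matrix with diagonal $(0,\omega^{2},0)$, whose correct corner is $X=\omega^{2}ab$) matches your value, not a literal reading of (\ref{eqth}). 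So if you carry your plan out, you should state explicitly that in the all-zero case the conclusion is $a_{ij}=R_{ij}$, which corrects the constant in (\ref{eqth}); as stated, neither the theorem's formula nor the paper's own inductive step is accurate there, while your case split handles it correctly.
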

 
In the section 3, using the above theorem, we will prove the following formula of $(k+1)$-potent matrices in upper triangular matrix groups

\begin{theorem}\label{th2} Let $\mathbb{K}$ be a finite field such that $char(\mathbb{K}) \nmid k$, $char(\mathbb{K})\nmid k+1$ and $|\mathbb{K}|=q$. Then the total number of $ n\times n$ upper triangular $(k+1)$-potent matrices with elements of the set  $D=\{0,1,\omega, \omega^2,\ldots.\omega^{k-1}\}$ on
the diagonal where $\omega$ is a kth-root of unity in the matrix ring $T_n( \mathbb{K})$ is equal to
$$ \displaystyle \sum_{\substack{n_0+\cdots+ n_k=n \\ 0\leq n_i}} \binom{n}{n_0n_1n_2\cdots n_k}\cdot q^{\displaystyle \sum_{\substack{0\leq i<j \leq k }}n_i\cdot n_j}, \ where \ \binom{n}{n_0\cdots n_k}=\frac{n!}{n_0!\cdot n_1\cdots n_k!}.
$$
\end{theorem}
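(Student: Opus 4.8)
The plan is to use Theorem~\ref{th1} to identify a $(k+1)$-potent upper triangular matrix with a pair of data — a choice of diagonal together with a choice of the ``free'' superdiagonal entries — and then to count these directly by grouping the diagonal assignments according to how the $k+1$ possible diagonal values are distributed.

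First I would fix the diagonal. By Theorem~\ref{th1}(i) each diagonal entry $a_{ii}$ lies in $D=\{0,1,\omega,\ldots,\omega^{k-1}\}$, a set of $k+1$ elements. For $0\le i\le k$ let $n_i$ be the number of diagonal positions carrying the $i$-th element of $D$; then $n_0+\cdots+n_k=n$, and the number of diagonal assignments realizing a prescribed vector $(n_0,\ldots,n_k)$ is exactly the multinomial coefficient $\binom{n}{n_0\cdots n_k}$. This produces the first factor in the summand.

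Next I would count the superdiagonal entries for a fixed diagonal. By Theorem~\ref{th1} the entries $a_{ij}$ with $i<j$ split into two classes: when $a_{ii}=a_{jj}$ the value of $a_{ij}$ is forced by \eqref{eqth} (the characteristic hypotheses make this expression a well-defined element of $\mathbb{K}$, since the factor $1/k$ is legitimate as $char(\mathbb{K})\nmid k$), contributing no freedom; when $a_{ii}\ne a_{jj}$ the entry is arbitrary by part (iii), contributing a factor of $q=|\mathbb{K}|$. Although the forced entries are defined by a recursion on $j-i$ that may reference other forced entries, the whole matrix is determined \emph{uniquely} once the diagonal and the free entries are chosen; this uniqueness is precisely the bijective content of Theorem~\ref{th1}. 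Hence the number of $(k+1)$-potent matrices with a given diagonal equals $q^{N}$, where $N$ is the number of pairs $i<j$ with $a_{ii}\ne a_{jj}$.

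The key point is that $N$ depends only on the type $(n_0,\ldots,n_k)$ and not on the particular arrangement of the diagonal values: the pairs $i<j$ with $a_{ii}\ne a_{jj}$ correspond bijectively to the unordered pairs of positions carrying distinct diagonal values, so that
\[
N \;=\; \binom{n}{2}-\sum_{i=0}^{k}\binom{n_i}{2}\;=\;\frac{1}{2}\Bigl(n^2-\sum_{i=0}^{k}n_i^2\Bigr)\;=\;\sum_{0\le i<j\le k} n_i\,n_j .
\]
Combining the two counts and summing over all admissible types $(n_0,\ldots,n_k)$ yields the stated formula. The only steps demanding care are this arrangement-independence of $N$ and the confirmation that Theorem~\ref{th1} furnishes a genuine parametrization — that the free entries really can be assigned independently while the remaining entries are consistently determined with no hidden obstruction; granting that, the rest is the elementary combinatorial identity displayed above.
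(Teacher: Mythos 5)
Your proposal is correct and follows essentially the same route as the paper: fix the diagonal type $(n_0,\ldots,n_k)$, count arrangements by the multinomial coefficient, observe that the number of pairs $i<j$ with $a_{ii}\ne a_{jj}$ equals $\sum_{0\le i<j\le k} n_i n_j$ independently of the arrangement, and attach a factor $q$ to each such pair while the remaining entries are forced by Theorem~\ref{th1}. Your explicit verification that $N=\binom{n}{2}-\sum_i\binom{n_i}{2}=\sum_{i<j}n_in_j$ and your remark on the parametrization being bijective are slightly more careful than the paper's wording, but the argument is the same.
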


In the section 4, for Incidence Algebras, we assume that there are $s$ $(k+1)$-potent elements in $\mathbb{K}$, in this case we have for Star Poset

\begin{theorem}[The Star Poset]\label{teo1} Let $\mathbb{K}$ be a field of be any field such that $char\mathbb{K}\nmid  k$, $char \mathbb{K}\nmid (k+1)$ and $|\mathbb{K}|=q$. Suppose that there are $s$ elements in $\mathbb{K}$ such that $x^{k+1}=x$, with $k$ an positive integer. Consider the poset $X=\{x_0,x_1, x_2, \ldots, x_n,y_1, y_2,\ldots,y_m\}$  with the relations
\begin{itemize}
\item $x_0\leq x_i $ for $i=1,2, \ldots, n$, and $x_i\leq x_j$ for $i\leq j$ with $i,j=1, 2, \ldots, n$,
\item $x_0\leq y_i $ for $i=1,2, \ldots, m$, and $y_i\leq y_j$ for $i\leq j$ with $i,j=1, 2, \ldots, m$.
\end{itemize}

Denote by $S=\{x_0,x_1,\cdots,x_n\}$ the subposet of $X$ and by $\mathcal{N}(X,k+1,\mathbb{K})$ the number of $(k+1)$-potent elements in the incidence algebra $\mathcal{I}(X, \mathbb{K})$. Then
$$\mathcal{N}(X,k+1,\mathbb{K})=\mathcal{N}(n+1,k+1,\mathbb{K})\cdot P(m,k+1),$$
where
$$P(m,k+1)=\sum_{\substack{m_1+m_2+\cdots +m_s=m \\ 0\leq m_i}}\binom{m}{m_1m_2\cdots m_s} q^{\Delta(d_S)}\cdot q^{ \sum^s_{i=2}m_i}$$
and $\mathcal{N}(n+1,k+1,\mathbb{K})$ is the number of $(k+1)$-potent elements in the upper triangular group $UT_{(n+1)}(\mathbb{K})$. In general, if consider the poset $X=\bigcup^{r}_{i=1}X_{i}$ such that $x_0$ is the minimal element with $\bigcap^{r}_{i=1}X_{i}=\{x_0\}$ and each one $X_{i}$ is an interval with length $m_i+1$,then, the number of $(k+1)$-potent elements   in the incidence algebra $\mathcal{I}(X,\mathbb{K})$ is
$$\mathcal{N}(X,k+1,\mathbb{K})=\mathcal{N}(X_1,k+1,\mathbb{K})\cdot \prod_{t=2}^{r} P(m_t,k+1).$$
\end{theorem}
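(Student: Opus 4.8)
The plan is to realise $\mathcal{I}(X,\mathbb{K})$ inside a triangular matrix group and then reduce everything to Theorem~\ref{th1}. Fix a linear extension of $X$, say $x_0<x_1<\cdots<x_n<y_1<\cdots<y_m$; then $\mathcal{I}(X,\mathbb{K})$ is the subalgebra of $UT_N(\mathbb{K})$, $N=n+1+m$, consisting of those $A$ with $a_{uv}=0$ whenever $u\not\le v$ in $X$. Since $\mathcal{I}(X,\mathbb{K})$ is a subalgebra, $A^{k+1}$ is the same computed in $\mathcal{I}(X,\mathbb{K})$ or in $UT_N(\mathbb{K})$, so the $(k+1)$-potent elements of $\mathcal{I}(X,\mathbb{K})$ are exactly the $(k+1)$-potent elements of $UT_N(\mathbb{K})$ that lie in $\mathcal{I}(X,\mathbb{K})$. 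Relative to the partition $\{x_0\}\cup\{x_1,\dots,x_n\}\cup\{y_1,\dots,y_m\}$ the only off-diagonal entries permitted are those inside the chain $S=\{x_0,\dots,x_n\}$, those inside the chain $\{y_1,\dots,y_m\}$, and the entries $a_{x_0,y_j}$; in particular the two branches interact only through the shared minimum $x_0$.

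First I would check that membership in $\mathcal{I}(X,\mathbb{K})$ imposes no hidden relation beyond the vanishing of the forbidden entries, which sit at the incomparable pairs $(x_{i'},y_{j'})$ with $i'\ge 1$. If $a_{x_{i'}x_{i'}}\neq a_{y_{j'}y_{j'}}$, part (iii) of Theorem~\ref{th1} makes this entry free and we simply set it to $0$. If $a_{x_{i'}x_{i'}}=a_{y_{j'}y_{j'}}$, part (ii) forces it to equal \eqref{eqth}; but every monomial there is a product of entries $a_{u,v}$ taken along an increasing chain from $x_{i'}$ to $y_{j'}$ in the linear extension, and since $x_{i'}\not\le y_{j'}$, transitivity shows that at least one consecutive pair of that chain is incomparable in $X$, so the corresponding factor is an entry at a forbidden position and vanishes. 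Hence \eqref{eqth} evaluates to $0$, consistently with $A\in\mathcal{I}(X,\mathbb{K})$. Consequently the $(k+1)$-potent elements of $\mathcal{I}(X,\mathbb{K})$ are parametrised exactly by a choice of diagonal in $\{0,1,\omega,\dots,\omega^{k-1}\}$ at each vertex, together with an arbitrary value at each comparable pair $u<v$ with $a_{uu}\neq a_{vv}$, the remaining comparable entries being determined by \eqref{eqth}.

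Next I would carry out the count. By the previous step,
\[
\mathcal{N}(X,k+1,\mathbb{K})=\sum_{\mathrm{diag}}q^{\#\{u<v\ \text{comparable in }X:\ a_{uu}\neq a_{vv}\}}.
\]
The comparable pairs of $X$ split into those contained in $S$ and those of the form $(x_0,y_j)$ or $(y_i,y_j)$ with $i<j$; these two families involve disjoint blocks of the diagonal except that both see the value $c:=a_{x_0x_0}$. Summing first over the diagonal of $S$ reproduces, by the same parametrisation applied to the chain $S$, the number $\mathcal{N}(n+1,k+1,\mathbb{K})$ of $(k+1)$-potent elements of $UT_{n+1}(\mathbb{K})$. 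For the $y$-branch, grouping the diagonal values of $y_1,\dots,y_m$ by how many equal each of the $s$ admissible scalars, with $m_1$ of them equal to $c$, yields the multinomial $\binom{m}{m_1\cdots m_s}$, the factor $q^{\Delta(d_S)}$ with $\Delta(d_S)=\sum_{t<t'}m_tm_{t'}$ recording the pairs $(y_i,y_j)$ of distinct diagonal, and the factor $q^{\sum_{t=2}^{s}m_t}$ recording the pairs $(x_0,y_j)$ with $a_{y_jy_j}\neq c$; this is exactly $P(m,k+1)$. The crucial point is that this $y$-sum does not depend on which admissible scalar $c$ is: relabelling the $s$ scalars is a bijection on diagonals of the $y$-branch preserving the exponent, so the sum depends on $c$ only through the combinatorics of equalities. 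Hence the double sum factorises and $\mathcal{N}(X,k+1,\mathbb{K})=\mathcal{N}(n+1,k+1,\mathbb{K})\cdot P(m,k+1)$.

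The general statement follows by the same block decomposition: when $X=\bigcup_{i=1}^{r}X_i$ with $\bigcap_i X_i=\{x_0\}$ and each $X_i$ a chain, distinct branches interact only through $x_0$, so the exponent is a sum of per-branch contributions and the count factorises over branches; absorbing the choice of $c=a_{x_0x_0}$ into the first branch gives $\mathcal{N}(X_1,k+1,\mathbb{K})$, while each remaining branch contributes $P(m_t,k+1)$ by the $c$-independence above, giving $\mathcal{N}(X,k+1,\mathbb{K})=\mathcal{N}(X_1,k+1,\mathbb{K})\cdot\prod_{t=2}^{r}P(m_t,k+1)$. The main obstacle is the second paragraph: verifying that the forbidden cross-entries are automatically forced to $0$ by \eqref{eqth}, and that the branch sums are genuinely independent of the shared diagonal value $c$. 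Once these two facts are secured, the factorisation is a routine separation of the exponent into per-branch pieces.
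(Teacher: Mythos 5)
Your proposal is correct, and its counting core --- parametrising the $(k+1)$-potent elements of $\mathcal{I}(X,\mathbb{K})$ by a choice of admissible diagonal together with one free entry for every comparable pair carrying distinct diagonal values, then factoring the resulting sum over the two branches of the star --- is the same as the paper's. Where you differ is in how that parametrisation is justified, and on both points your version is tighter. First, the paper writes the elements in block form and invokes the ``matrix representation'' of \cite{Ivan}, but it never addresses why the entries at \emph{incomparable} pairs (which must vanish for membership in $\mathcal{I}(X,\mathbb{K})$) are compatible with the constraints of Theorem~\ref{th1}: if such a pair carries equal diagonal values, part (ii) \emph{forces} the entry to equal \eqref{eqth}, and one must check this forced value is $0$. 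Your observation --- embed $\mathcal{I}(X,\mathbb{K})$ into $UT_N(\mathbb{K})$ via a linear extension and note that every monomial of \eqref{eqth} for an incomparable pair must traverse a consecutive pair that is incomparable in $X$, hence contains a factor sitting at a forbidden (zero) position --- is exactly the missing verification that makes the restriction of Theorem~\ref{th1} to the subalgebra legitimate. Second, your bookkeeping at the shared vertex $x_0$ repairs a genuine slip in the paper: the paper fixes $a_0=q_1$, never sums over the $s$ possible values of $a_0$, and then asserts that $\sum_{n_1+\cdots+n_s=n}\binom{n}{n_1\cdots n_s}q^{\Delta(d_A)}$ equals $\mathcal{N}(n+1,k+1,\mathbb{K})$; this identity is off by a factor of $s$, since the left-hand side counts only those diagonals of the chain $S$ whose first entry is $q_1$ (e.g.\ for $k=1$, $n=1$ it gives $1+q$ while $\mathcal{N}(2,2,\mathbb{K})=2+2q$). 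The two omissions cancel, so the paper's final formula is right, but your route --- keep the sum over $c=a_{x_0x_0}$, prove the $y$-branch sum $P(m,k+1)$ is independent of $c$ by relabelling the admissible scalars, and then absorb the sum over $c$ into the chain count $\mathcal{N}(n+1,k+1,\mathbb{K})$ --- is the correct way to make the factorisation, and it extends verbatim to the $r$-branch case.
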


We also consider the case of Rhombuses poset:

\begin{theorem}[The Rhombuses poset]\label{teo2} Let $\mathbb{K}$ be  any field such that $char\mathbb{K}\nmid  k$, $char \mathbb{K}\nmid (k+1)$ and $\left|\mathbb{K}\right|=q$. Consider the Rhombuses poset 
$$X=\{x_0,x_1,x_2, \cdots ,x_n,x_{n+1},y_1,y_2, \cdots, y_m\},$$ 
with the relations:
\begin{itemize}
    \item $x_0 \leq x_1 \leq \cdots \leq x_n \leq x_{n+1}$ and $ y_1 \leq y_2 \leq \cdots \leq y_m$,
    \item $x_0\leq y_1$ and $y_m \leq x_{n+1}$.
    \end{itemize} 
    Suppose that there are $s$  solutions of the equation $x^{k+1}=x$ in $\mathbb{K}$, then we have:

$$ \displaystyle \sum_{\substack{n_1+n_2+\cdots+n_s=n, \\ m_1+m_2+\cdots+m_s=m, \\ 0\leq n_i, 0\leq m_i,\ i=1,2,\cdots,s}} s \cdot  \binom{n}{n_1n_2\cdots n_s}\binom{m}{m_1m_2\cdots m_s}\cdot q^{\displaystyle\sum_{1\leq i< j \leq s} n_i\cdot m_j+m_i\cdot m_j}\times $$ 
$$ \ \ \ \ \ \ \ \ \ \ \ \times\mathcal{F}(n_1,\cdots,n_s,m_1,\cdots,m_s),$$
$(k+1)$-potent elements in $\mathcal{I}(X,\mathbb{K})$, where
$$\mathcal{F}(n_1,\cdots,n_s,m_1,\cdots,m_s)=q^{2(n-n_1+m-m_1)}+ \displaystyle \sum_{j=2}^s q^{2(n+m)+1-(n_1+n_j+m_1+m_j)}.$$

\end{theorem}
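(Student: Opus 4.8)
The plan is to reduce the count to a weighted enumeration of diagonal colourings and then to do the combinatorics on the two chains of the rhombus separately. First I would transfer Theorem~\ref{th1} to the incidence algebra: embedding $\mathcal{I}(X,\mathbb{K})$ into $T_N(\mathbb{K})$ with $N=n+m+2$ via a linear extension of $X$, its image is the set of matrices supported on comparable pairs, and since this is a subalgebra the power $A^{k+1}$ stays inside it. Thus $A\in\mathcal{I}(X,\mathbb{K})$ is $(k+1)$-potent exactly when the characterisation of Theorem~\ref{th1} holds with every non-comparable entry forced to $0$; this is consistent because the determined-entry recursion only involves products along chains, which vanish off comparable positions. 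Writing $\chi(u)=a_{uu}$ for the ``colour'' of $u$ (one of the $s$ solutions of $x^{k+1}=x$), Theorem~\ref{th1} then says that for a comparable pair $u<_X v$ the entry $a_{uv}$ is uniquely determined when $\chi(u)=\chi(v)$ and is free (with $q$ choices) when $\chi(u)\neq\chi(v)$. Consequently
\begin{equation*}
\mathcal{N}(X,k+1,\mathbb{K})=\sum_{\chi}q^{\,f(\chi)},\qquad f(\chi)=\#\{(u,v): u<_X v,\ \chi(u)\neq\chi(v)\}.
\end{equation*}

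Next I would compute $f(\chi)$ by listing the comparabilities of the rhombus: those internal to the chain $x_0<\cdots<x_{n+1}$, those internal to $y_1<\cdots<y_m$, and the pairs $(x_0,y_j)$ and $(y_j,x_{n+1})$; the interior vertices $x_i$ and $y_j$ are mutually incomparable. The key elementary fact is that on a total order the number of bichromatic comparable pairs depends only on the colour multiset: for a chain with colour counts $c_1,\dots,c_s$ it equals $\binom{\sum_i c_i}{2}-\sum_i\binom{c_i}{2}=\sum_{i<j}c_ic_j$, which is the exponent that appears for a single chain in Theorem~\ref{th2}. Separating the endpoints $x_0,x_{n+1}$ from the interiors, and writing $(n_1,\dots,n_s)$ and $(m_1,\dots,m_s)$ for the colour counts of $x_1,\dots,x_n$ and of $y_1,\dots,y_m$, the interior--interior part is $\sum_{i<j}(n_in_j+m_im_j)$, and I would record the pairs meeting an endpoint as
\begin{equation*}
f(\chi)=\sum_{i<j}(n_in_j+m_im_j)+g(c_0)+g(c_t)+[\,c_0\neq c_t\,],\qquad g(c):=(n-n_c)+(m-m_c),
\end{equation*}
where $c_0=\chi(x_0)$, $c_t=\chi(x_{n+1})$, and $[\,\cdot\,]$ is the Iverson bracket coming from the single pair $(x_0,x_{n+1})$.

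It remains to sum $q^{f(\chi)}$ over colourings. The interior colours contribute the multinomials $\binom{n}{n_1\cdots n_s}\binom{m}{m_1\cdots m_s}$ together with $q^{\sum_{i<j}(n_in_j+m_im_j)}$, which is invariant under permuting the $s$ colours. For fixed interior counts I would sum over $c_t$ with $c_0$ held at a reference colour: the term $c_t=c_0$ gives $q^{2g(c_0)}$, while $c_t\neq c_0$ gives $q^{g(c_0)+g(c_t)+1}$, and taking $c_0$ to be colour $1$ these assemble into exactly
\begin{equation*}
\mathcal{F}(n_1,\dots,n_s,m_1,\dots,m_s)=q^{2(n-n_1+m-m_1)}+\sum_{j=2}^{s}q^{2(n+m)+1-(n_1+n_j+m_1+m_j)}.
\end{equation*}
Because the interior weight is symmetric in the colours, summing over the choice of $c_0$ multiplies the $c_0=1$ expression by $s$, producing the leading factor and the stated formula. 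The main obstacle is precisely this endpoint bookkeeping: $x_0$ and $x_{n+1}$ lie on both maximal chains, so their pairs with the two interiors and with each other must be tallied once each, and the dichotomy $c_0=c_t$ versus $c_0\neq c_t$ is what splits $\mathcal{F}$ into its two summands. I would also take care that the symmetry used to extract the factor $s$ genuinely applies, which requires the interior exponent to be the symmetric chain counts $\sum_{i<j}n_in_j$ and $\sum_{i<j}m_im_j$ rather than any colour-asymmetric expression.
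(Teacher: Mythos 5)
Your proposal is correct and follows essentially the same route as the paper's proof: the paper likewise reduces the count to a multinomial choice of diagonal values on the two interior chains, fixes $a_0=q_1$ and extracts the leading factor $s$ by symmetry among the $s$ solutions, and then splits on whether $x_{n+1}$ carries the same solution as $x_0$, which is exactly your $c_t=c_0$ versus $c_t\neq c_0$ dichotomy producing the two summands of $\mathcal{F}$. The only differences are cosmetic: you justify the free/determined dichotomy by embedding $\mathcal{I}(X,\mathbb{K})$ into $T_{n+m+2}(\mathbb{K})$ and invoking Theorem \ref{th1} together with the vanishing of chain products over incomparable pairs, where the paper instead cites the block-matrix representation of \cite{Ivan}; moreover your interior exponent $\sum_{i<j}(n_in_j+m_im_j)$ agrees with the proof's $\Delta(d_R)+\Delta(d_S)$, the exponent $n_i\cdot m_j+m_i\cdot m_j$ printed in the theorem statement being a typo.
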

Finally, for the $Y$ poset
\begin{theorem}[The $Y$ Poset]\label{teo3} Let $\mathbb{K}$ be any field such that $char\mathbb{K}\nmid  k$, $char\mathbb{K} \nmid (k+1)$ and $\left|\mathbb{K}\right|=q$. Consider the poset $Y=\{r_1,r_2, \cdots, r_n, s_1,s_2 \cdots, s_m, t_1,t_2, \cdots t_l\}$ with the relations:
\begin{itemize}
    \item $r_1,\leq r_2 \leq \cdots \leq  r_n$, $s_1 \leq s_2 \leq \cdots \leq s_m$ and $t_1 \leq t_2 \leq \cdots\leq  t_l$,
    \item $r_n \leq s_i$ with $i=1,2,\cdots, m$ and $r_n\leq t_j$ with $j=1,2,\cdots, l$.
\end{itemize}
Suppose that there are $s$ solutions of the equation  $x^{k+1}=x$ in $\mathbb{K}$, then we have:

$$\displaystyle \sum_{\substack{n_1+n_2+\cdots+n_s=n, \\ m_1+m_2+\cdots+m_s=m, \\ l_1+l_2+\cdots+l_s=l, \\ 0\leq n_i,m_i,l_i, \ i=1,2,\cdots, s}}\binom{n}{n_1n_2\cdots n_s}\binom{m}{m_1m_2\cdots m_s}\binom{l}{l_1l_2\cdots l_s} \times \ \ \ \ \ \ \ $$ 
$$\ \ \ \ \ \ \ \ \ \ \ \ \ \ \times \mathcal{F}(n_1,\cdots,n_s,m_1,\cdots,m_s,l_1,\cdots,l_s),$$
$(k+1)$-potent elements in $\mathcal{I}(X,\mathbb{K})$, where
$$\mathcal{F}(n_1,\cdots\!,n_s,m_1,\!\cdots\!,m_s,l_1,\!\cdots\!,l_s)\!=\!q^{\displaystyle\sum_{1\leq i<j\leq s}\!\! (n_in_j\!+\!m_im_j\!+\!l_il_j)}\!\! \cdot q^{\displaystyle \sum_{1\leq i\neq j \leq s}\!\! n_i(m_j\!+\!l_j)}.$$
\end{theorem}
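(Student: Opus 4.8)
The plan is to deduce \Cref{teo3} from the structural description of $(k+1)$-potent matrices in \Cref{th1}, reading the incidence algebra $\mathcal{I}(Y,\mathbb{K})$ as a subalgebra of the triangular matrix ring $T_N(\mathbb{K})$ with $N=n+m+l$. Fixing the linear extension $r_1,\dots,r_n,s_1,\dots,s_m,t_1,\dots,t_l$ of $Y$, an element $A\in\mathcal{I}(Y,\mathbb{K})$ is exactly an upper triangular matrix whose entry $a_{xy}$ is forced to be $0$ whenever $x\not\le y$ in $Y$. First I would record which pairs are comparable: $r_a<r_{a'}$, $s_b<s_{b'}$, $t_c<t_{c'}$ inside each chain, together with $r_a<s_b$ and $r_a<t_c$ for all indices (these come from $r_a\le r_n\le s_b$ and $r_a\le r_n\le t_c$), while no $s_b$ is comparable to any $t_c$. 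These are the only positions that can carry nonzero entries.

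Next I would transport \Cref{th1} to this subalgebra. Since multiplication in $\mathcal{I}(Y,\mathbb{K})$ only pairs entries along poset chains, the $(x,y)$-entry of $A^{k+1}$ (for $x\le y$) is a sum over chains $x=z_0\le z_1\le\cdots\le z_{k+1}=y$ in $Y$, and the equation $A^{k+1}=A$ splits exactly as in \Cref{th1}: every diagonal entry satisfies $a_{xx}^{k+1}=a_{xx}$, so lies in the $s$-element solution set $D$; for a comparable pair $x<y$ with $a_{xx}=a_{yy}$ the entry $a_{xy}$ is determined by the recursion \eqref{eqth} restricted to poset chains; and for $a_{xx}\ne a_{yy}$ the entry $a_{xy}$ is free. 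I would then verify the compatibility point on which everything hinges: if $x\not\le y$ in $Y$ but $x<y$ in the chosen extension, the forced value $a_{xy}=0$ must be consistent with the characterization. When $a_{xx}\ne a_{yy}$ this is automatic, since a free entry may be set to $0$; when $a_{xx}=a_{yy}$ I would argue by induction on the positional distance of $x$ and $y$ in the extension that the right-hand side of \eqref{eqth} vanishes, because its summands are products of strictly shorter entries which by the inductive hypothesis are supported on poset relations, so a nonzero summand would exhibit a chain of comparabilities from $x$ to $y$ and, by transitivity, force $x\le y$, contradicting $x\not\le y$.

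\textbf{This compatibility step is the main obstacle}: it is what guarantees that the $(k+1)$-potent elements of $T_N(\mathbb{K})$ of \Cref{th1} which lie in $\mathcal{I}(Y,\mathbb{K})$ are precisely those built by freely choosing the comparable different-diagonal entries and letting the recursion fill in the comparable equal-diagonal entries, with all incomparable positions equal to $0$. Once this is in place, the enumeration is pure bookkeeping, exactly as in the single-chain count underlying \Cref{th2}.

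The remaining step is the count. I would first choose the diagonal: coloring the three chains by the $s$ elements of $D$ with multiplicities $n_i,m_i,l_i$ contributes $\binom{n}{n_1\cdots n_s}\binom{m}{m_1\cdots m_s}\binom{l}{l_1\cdots l_s}$ sequences, and the remaining freedom is exactly one factor $q$ for each comparable pair whose two diagonal values differ. Counting these by relation type via complementary counting gives, inside each chain, $\sum_{i<j}n_in_j$, $\sum_{i<j}m_im_j$ and $\sum_{i<j}l_il_j$ positions (using $\binom{n}{2}-\sum_i\binom{n_i}{2}=\sum_{i<j}n_in_j$, and likewise for the others), and across the branches $\sum_{i\ne j}n_im_j=nm-\sum_i n_im_i$ positions of type $r$–$s$ together with $\sum_{i\ne j}n_il_j$ positions of type $r$–$t$; there are no $s$–$t$ positions. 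Multiplying the resulting $q$-powers yields $\mathcal{F}(n_1,\dots,n_s,m_1,\dots,m_s,l_1,\dots,l_s)$, and summing the product of the three multinomials against $\mathcal{F}$ over all admissible $(n_i),(m_i),(l_i)$ gives the stated total. I expect the only delicate point to be the compatibility argument above; the algebra in this last paragraph is routine.
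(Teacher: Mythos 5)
Your proposal is correct and follows essentially the same route as the paper: both reduce to the characterization in Theorem \ref{th1}, identify the free parameters as exactly the comparable pairs with distinct diagonal values (within each of the three chains, plus the $r$--$s$ and $r$--$t$ pairs, with no $s$--$t$ contribution), and then count diagonal colorings by the product of multinomial coefficients times $q$ raised to the number of free positions, which is precisely $\mathcal{F}$. The only substantive difference is your compatibility lemma — the induction showing that the recursion \eqref{eqth} automatically returns $0$ at incomparable positions, so that the $(k+1)$-potent matrices of $T_{n+m+l}(\mathbb{K})$ built from data supported on poset relations genuinely lie in $\mathcal{I}(Y,\mathbb{K})$; the paper never addresses this point, handling it implicitly through the block form with zero $s$--$t$ block and the matrix representation it cites from \cite{Ivan}, so your argument supplies a justification the paper leaves tacit rather than taking a different path.
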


In the section 5 we present an important remark and we compare our approach with the result obtain by Slowik in \cite{Slowik}.

In the section 6 we present various tables with some result about the number of $(k+1)$-potent elements.

\section{$k$-Potent in triangular matrix group}\label{intro}

In this section we consider $T_n(\mathbb{K})$ the group of upper triangular matrices of dimension $n$ and $T_{\infty}(\mathbb{K})$ the group of infinite upper triangular matrices.  Let $k \geq 1$ be an integer. If
a square matrix $A$ satisfies $A^{k+1} = A$, then $A$ is said to be $(k+1)$-potent. For $k = 1$, $A$ is said idempotent matrix. We assume that there is $\omega$ a $k$th non-trivial root of unity in $\mathbb{K}$. 

We start our considerations we notice the following property.

\begin{remark}\label{remark1} Assume that $\mathbb{K}$ is a field, $G=T_{\infty}(\mathbb{K})$ or $G=T_n(\mathbb{K})$ for some $n\in\mathbb{N}$. If $A\in G$ is a block matrix such that
$$A=\left[ \begin{array}{cccc}
B_{11} &B_{12}& B_{13}& \cdots \\ 
&B_{22}&B_{23} & \cdots\\
 && B_{33}& \cdots  \\
 &&&\ddots
\end{array}
\right]
$$
where $B_{ii}$ are square  matrices and $A$ is a $(k+1)$-potent matrix, then for all $i$, the matrices $B_{ii}$ are $(k+1)$-potents matrices as well.
\end{remark}
\begin{proof} Since $A$ is a $(k+1)$-potent matrix of $G$, we have
$$A^{k+1}=\left[ \begin{array}{cccc}
B_{11}^{k+1} & *& *& \cdots \\ 
&B_{22}^{k+1}&* & \cdots\\
 && B_{33}^{k+1}& \cdots  \\
 &&&\ddots
\end{array}
\right]=\left[ \begin{array}{cccc}
B_{11} &*& *& \cdots \\ 
&B_{22}&* & \cdots\\
 && B_{33}& \cdots  \\
 &&&\ddots
\end{array}
\right]=A,
$$
and we obtain $B_{ii}^{k+1}=B_{ii}$ for all $i$ by comparing entries of the diagonal position in the matrix equality above.
\end{proof}

Before we prove our main result, in Slowik \cite{Slowik} we have the following information about the powers of triangular matrices.
\begin{lemma}[Lemma 2.1 Slowik \cite{Slowik}] Let $\mathbb{K}$ be an arbitrary field and let $n\geq 3$. If $A\in T_n(\mathbb{K})$ is of the form
\begin{equation}\label{eq11}
A= \left[ \begin{array}{ccc}
r &a & x \\
0  & u & b \\ 
0  & 0 & t
\end{array}
\right]
\end{equation}
where $r,t \in \mathbb{K}$ and $u \in T_{n-2}(\mathbb{K})$, then for any $k \in \mathbb{N}$ we have

\begin{equation}\label{eq22}
A^{k+1}= \left[ \begin{array}{ccc}
r^{k+1} &a' & x' \\ 
0  & u^{k+1} & b' \\
0  & 0 & t^{k+1}
\end{array}
\right]
\end{equation}
where
\begin{equation}\label{aa}
a' = a\cdot (r^{k} \cdot e + r^{k-1}\cdot u + r^{k-2}\cdot u^2 +\cdots +r \cdot u^{k-1} + u^{k})
\end{equation}
\begin{equation}\label{bb}
b' = (u^{k} + u^{k-1}\cdot t + u^{k-2}\cdot t^2 +\cdots +u \cdot t^{k-1} + e\cdot t^{k}) \cdot b
\end{equation}

\begin{equation}\label{xx}
\begin{array}{ccl}
x' & =&x\cdot (r^k+r^{k-1}\cdot t+\cdots+ r\cdot t^{k-1}+t^k)  \\
 &&\\
     & & +a\cdot [(r^{k-1}+r^{k-2}\cdot t+\cdots+r\cdot t^{k-2}+t^{k-1})\cdot e  \\
     &&\\
     & & +(r^{k-2}+r^{k-3}\cdot t+\cdots+r^j\cdot t^{k-3}+t^{k-2})\cdot u\\
     &&\\
     & &+ (r^{k-3}+r^{k-4}\cdot t+\cdots+r\cdot t^{k-4}+t^{k-3})\cdot u^2 +\cdots\\
     &&\\
     & & \cdots +(r+t)\cdot u^{k-2}+u^{k-1}]\cdot b.
     \end{array}
\end{equation}
\end{lemma}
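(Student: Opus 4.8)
The plan is to prove the stated identity by establishing the more general power formula for every exponent $m\geq 1$ and then specializing to $m=k+1$. Write
\[
A^m=\left[\begin{array}{ccc} r^m & a^{(m)} & x^{(m)} \\ 0 & u^m & b^{(m)} \\ 0 & 0 & t^m \end{array}\right],
\]
and argue by induction on $m$. The base case $m=1$ is immediate, since $a^{(1)}=a$, $b^{(1)}=b$, $x^{(1)}=x$. For the inductive step I would compute $A^{m+1}=A\cdot A^{m}$ blockwise, using that the product of block upper-triangular matrices is again block upper-triangular with diagonal blocks multiplying positionwise; this instantly yields $r^{m+1}$, $u^{m+1}$, $t^{m+1}$ on the diagonal, so only the three off-diagonal blocks require attention. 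The key observation is that $r,t$ are scalars and $a,b$ have compatible shapes (a row and a column of length $n-2$), so all the products below are well defined.

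For the $(1,2)$ and $(2,3)$ blocks the recursions are single-term and telescope cleanly. Block multiplication gives $a^{(m+1)}=r\,a^{(m)}+a\,u^{m}$ and $b^{(m+1)}=u\,b^{(m)}+b\,t^{m}$. Feeding in the inductive forms $a^{(m)}=a\sum_{i=0}^{m-1}r^{m-1-i}u^{i}$ and $b^{(m)}=\bigl(\sum_{i=0}^{m-1}u^{m-1-i}t^{i}\bigr)b$ produces, after absorbing the extra term into the sum, exactly $a^{(m+1)}=a\sum_{i=0}^{m}r^{m-i}u^{i}$ and $b^{(m+1)}=\bigl(\sum_{i=0}^{m}u^{m-i}t^{i}\bigr)b$. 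Setting $m=k+1$ recovers \eqref{aa} and \eqref{bb} verbatim, so these two parts are entirely routine.

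The corner block $x^{(m)}$ is where the real work lies. Block multiplication yields the inhomogeneous scalar recursion
\[
x^{(m+1)}=r\,x^{(m)}+a\,b^{(m)}+x\,t^{m},\qquad x^{(1)}=x,
\]
which unwinds to $x^{(m)}=r^{m-1}x+\sum_{j=1}^{m-1}r^{m-1-j}\bigl(a\,b^{(j)}+x\,t^{j}\bigr)$. The pure-scalar contribution $r^{m-1}x+x\sum_{j=1}^{m-1}r^{m-1-j}t^{j}$ collapses to $x\sum_{j=0}^{m-1}r^{m-1-j}t^{j}$, matching the first line of \eqref{xx} once $m=k+1$. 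Substituting $b^{(j)}=\bigl(\sum_{i=0}^{j-1}u^{j-1-i}t^{i}\bigr)b$ into the remaining term gives a double sum $a\bigl(\sum_{j=1}^{m-1}\sum_{i=0}^{j-1}r^{m-1-j}\,u^{\,j-1-i}\,t^{i}\bigr)b$, and the essential maneuver is to re-index by the power $p=j-1-i$ of $u$. Collecting terms with fixed $p$ turns the coefficient of $u^{p}$ into $\sum_{\ell=0}^{m-2-p}r^{m-2-p-\ell}t^{\ell}$, which for $m=k+1$ is precisely $r^{k-1-p}+r^{k-2-p}t+\cdots+t^{k-1-p}$, i.e.\ exactly the bracketed nested coefficients of $e,u,u^{2},\ldots,u^{k-1}$ appearing in \eqref{xx}.

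I expect this last re-indexing of the double sum to be the only genuine obstacle; it is purely combinatorial bookkeeping, and the risk is an off-by-one error in the ranges of $j$, $i$, and $p$. Everything else—the diagonal powers, the telescoping of $a^{(m)}$ and $b^{(m)}$, and the scalar part of $x^{(m)}$—is mechanical. Once the coefficient of each $u^{p}$ is identified, specializing $m=k+1$ assembles \eqref{eq22} together with \eqref{aa}, \eqref{bb}, and \eqref{xx}, completing the proof.
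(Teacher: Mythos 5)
Your argument is correct, but there is nothing in the paper to compare it against: the paper does not prove this statement at all --- it is quoted verbatim as Lemma 2.1 of Slowik \cite{Slowik} and used as a black box to derive the specializations (\ref{eq1}) and (\ref{eq2}). So your induction supplies a self-contained proof of an imported result rather than an alternative to an existing one. The proof itself checks out: block multiplication of $A\cdot A^{m}$ gives exactly the recursions $a^{(m+1)}=r\,a^{(m)}+a\,u^{m}$, $b^{(m+1)}=u\,b^{(m)}+b\,t^{m}$ and $x^{(m+1)}=r\,x^{(m)}+a\,b^{(m)}+x\,t^{m}$, all products are shape-compatible, and since $r,t$ are scalars the needed commutations are free. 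The telescoping for $a^{(m)}$ and $b^{(m)}$ immediately yields (\ref{aa}) and (\ref{bb}) at $m=k+1$, and your re-indexing of the corner term is right: setting $p=j-1-i$ in
$$a\Bigl(\sum_{j=1}^{m-1}\sum_{i=0}^{j-1}r^{m-1-j}\,u^{\,j-1-i}\,t^{i}\Bigr)b$$
makes $j=\ell+p+1$ run over $p+1,\dots,m-1$, so the coefficient of $u^{p}$ is $\sum_{\ell=0}^{m-2-p}r^{m-2-p-\ell}t^{\ell}$, which at $m=k+1$ is precisely the bracketed coefficient of $u^{p}$ in (\ref{xx}) for $p=0,1,\dots,k-1$, while the pure-scalar part collapses to $x\sum_{j=0}^{k}r^{k-j}t^{j}$. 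As a side benefit, your general formula confirms that the term ``$r^{j}\cdot t^{k-3}$'' printed in (\ref{xx}) is a typo for $r\cdot t^{k-3}$.
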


From this Lemma we obtain that
\begin{equation}\label{eq1}
\left[\! \begin{array}{c|c}
r \!&\!a\\ \hline
0  \!&\! u
\end{array}
\!\right]^{k+1}\!\!\!\!\!=\!\left[\! \begin{array}{c|c}
r^{k+1} \!\!&\!\! a\cdot (r^{k} \cdot e + r^{k-1}\cdot u +r^{k-2}\cdot u^2 +\cdots +r \cdot u^{k-1} + u^{k})\\ \hline
0 \!\! &\!\! u^{k+1}
\end{array}
\!\right]
\end{equation}
and
\begin{equation}\label{eq2}
\left[\! \begin{array}{c|c}
u \!&\!b\\ \hline
0 \! & \!t
\end{array}
\!\right]^{k+1}\!\!\!\!\!=\!\left[ \begin{array}{c|c}
u^{k+1} \!&\! (u^{k} + u^{k-1}\cdot t + u^{k-2}\cdot t^2 +\cdots +u \cdot t^{k-1} + e\cdot t^{k}) \cdot b\\ \hline
0  \!&\! t^{k+1}
\end{array}
\!\right].
\end{equation}

Now, we can prove our first main result.




\begin{proof}[Proof of Theorem \ref{th1}] 
Suppose that $A=\sum_{i\leq j}E_{ij}$ is a $(k+1)$-potent element of $T_n(\mathbb{K})$. Let $\omega$ be a $kth$ root of unity and let $A(m,i)$ be the submatrix of $A$ defined as
$$A(m,i)=\left[ \begin{array}{cccc}
a_{ii} &a_{i,i+1} & \cdots & a_{i,i+m}\\ 
&a_{i+1,i+1}&\cdots&a_{i+1,i+m}\\
 & & \ddots & \vdots\\
 &&&a_{i+m,i+m}
\end{array}
\right]
$$
where $a_{ii}\in\{0, 1, \omega, \omega^2, \ldots, \omega^{k-1}\}$.

From Remark (\ref{remark1}) one can see that $A$ is a $(k+1)$-potent matrix if and only if $A(m,i)$ is $(k+1)$-potent for all $m$ and $i$.

Since $A$ is an $(k+1)$-potent matrix with $a_{ii}\in\{0, 1, \omega, \omega^2, \ldots, \omega^{k-1}\}$, we need to prove that  (ii) and (iii) given in the Theorem \ref{th1} hold.

Consider the elements $a_{ii}$ and $a_{jj}$ of the matrix $A$ with $i<j$. Then we have that $j-i=1$ or $j-i>1$ and we use induction on $j-i=m$.

Assume that $j-1=1$, and denote for $a_{ii}=r$ and $a_{i+1,i+1}=t$ then from $A(1,i)^{k+1}=A(1,i)$ and the equation (\ref{eq1}) we obtain
\begin{equation}\label{eqm=1}
A(1,i)^{k+1}=\left[ \begin{array}{c|c}
r^{k+1} &a_{i,i+1}\cdot (r^{k} + r^{k-1}\cdot t + r^{k-2}\cdot t^2 +\cdots +r \cdot t^{k-1} + t^{k})\\ \hline
0  & t^{k+1}
\end{array}
\right].
\end{equation}

Thus
\begin{itemize}
\item If $r\not=t$, we have that
\begin{equation}\label{eqk}
\sum_{p=0}^{k}r^p t^{k-p}=
\left( r^{k+1}-t^{k+1}\right)\cdot (r-t)^{-1},
\end{equation}
then
$$
\begin{array}{rcl}
\!\!a_{i,i+1}\cdot (\!r^{k} + \!r^{k-1}\cdot t +\! r^{k-2}\cdot\! t^2 +\cdots +\!r \cdot t^{k-1} + \!t^{k})\!\!\!&\!\!=\!\!&\!\!a_{i,i+1}\cdot (\!r^{k+1}-\!t^{k+1})\!(r-t)^{-1}\\
\!\!\!&\!\!=\!\!&\!\!a_{i,i+1}\cdot(r-t)\!(r-t)^{-1}\\
\!\!\!&\!\!=\!\!&\!\!a_{i,i+1}
\end{array}
$$
and since $r^{k+1}=r$ and $t^{k+1}=t$ we concluded that $A(1,i)^{k+1}=A(1,i)$ regardless of $a_{i,i+1}$.
\item If $r=t$, then 
$$a_{i,i+1}\cdot(r^{k} + r^{k-1}\cdot t + r^{k-2}\cdot t^2 +\cdots +r \cdot t^{k-1} +t^{k})=a_{i,i+1}\cdot(k+1)\cdot r^{k}
$$
so we have the equation
$$a_{i,i+1}\cdot(k+1)\cdot r^{k}=a_{i,i+1},
$$
and as $char(\mathbb{K})\nmid k+1$, thus
\begin{itemize}
    \item  If $r\not=0$, then $a_{i,i+1}=0.$
    \item If $r=0$, then $a_{i,i+1}=0$.
\end{itemize}
\end{itemize}

Therefore, for $j-1=1$ we  have:
\begin{itemize}
    \item If $a_{ii} = a_{i+1,i+1}$, then $a_{i,i+1}=0$.
    \item If $a_{ii}\not= a_{i+1,i+1}$, then $a_{i,i+1}$ can be chosen arbitrarily.
\end{itemize}

Let $j-i=m$ be and assume now that for all $1\leq l\leq m-1$ we proved that if $A(l,i)^{k+1}=A(l,i)$ for all $i$, then $A(l,i)$ satisfy (ii) and (iii) of Theorem \ref{th1} and  we proved this to $m$. 

Suposse that $A(m,i)$ is as in the equation (\ref{eq11}) of the form $$A(m,i)=\left[ \begin{array}{ccc}
r& a & a_{i,i+m}\\ 
&u&  b \\
 &&t
\end{array}
\right]
$$
with $a_{ii}=r$, $a_{i+m,i+m}=t$ and $A(m,i)^{k+1}=A(m,i)$ where
$$A(m,i)^{k+1}=\left[ \begin{array}{ccc}
r^{k+1} & a' &x'\\ 
&u^{k+1}& b'\\
 &&t^{k+1}
\end{array}
\right].
$$

Analyzing $x'$ of  the equation (\ref{xx}) we have:

\begin{itemize}
    \item For $r\not=t$ we have of equation (\ref{eqk}) and  as $r^k=t^k=1$ then
$$\begin{array}{rcl}
x'&=& a_{i,i+m}\cdot \left(r^{k+1}-t^{k+1}\right)
\left(r-t\right)^{-1} \\
&&\\
&&+a\cdot [ \left(r^{k}-t^{k}\right)
\left(r-t\right)^{-1}\cdot e +\left(r^{k-1}-t^{k-1}\right)
\left(r-t\right)^{-1}\cdot u+\cdots \\
&&\\
&& +\left(r^{2}-t^{2}\right)
\left(r-t\right)^{-1}\cdot u^{k-2}+
\left(r-t\right)
\left(r-t\right)^{-1}\cdot u^{k-1} ]\cdot b,
\end{array}
$$
thus
\begin{equation}\label{eqmm}
\begin{array}{rl}
\!x'\!\!\!&\!=\!a_{i,i+m} \!+\!   a\!\cdot[\!r^{k}\cdot e +\!r^{k-1}\cdot u\!+\!\cdots +\!r^{2}\cdot u^{k-2}\!+\!r\cdot u^{k-1}\! +\!u^k-u^k ]\cdot
\left(r\!-\!t\right)^{-1}\cdot b\\
&\\
&- a\cdot[t^{k}\cdot e +t^{k-1}\cdot u+\cdots +t^{2}\cdot u^{k-2}+t\cdot u^{k-1}+u^k-u^k ]
\cdot\left(r-t)\right)^{-1}\cdot b.
\end{array}
\end{equation}
From equations (\ref{aa}) and (\ref{bb}) we have
$$
\begin{array}{rcl}
x'&=& a_{i,i+m} +\left[   (a'-a\cdot u^k)\cdot b-a\cdot(b'-b\cdot u^k )\right](r-t)^{-1},
\end{array}
$$
then
$$x'= a_{i,i+m}+ [a'\cdot b - a\cdot b'](r-t)^{-1}
$$
and since $A$ is $(k+1)$-potent $a=a'$ and $b=b'$, thus $a'\cdot b - a\cdot b'=0$, wich implies that $x'= a_{i,i+m}.$

Therefore, for  $a_{ii}\not=a_{i+m,i+m}$, $a_{i,i+m}$ can be chosen is arbitrarily.
\item For $r =t$, we have
$$
x'=a_{i,i+m}\cdot (k+1)\cdot r^k+a\cdot [k\cdot r^{k-1}\cdot e+\cdots +2  r \cdot u^{k-2}+u^{k-1}]\cdot b
$$
then
\begin{equation}\label{eqm}
x'=a_{i,i+m}\cdot k+a_{i,i+m}+ a\cdot [k\cdot r^{k-1}\cdot e+\cdots +2 r \cdot u^{k-2}+u^{k-1}]\cdot b
\end{equation}
so, since $char(\mathbb{K})\not= k+1$, the coefficient $a_{i,i+m}$ can be computed from equation $x'=a_{i,i+m}$, where
$$a_{i,i+m}\!=\!-k^{-1}\left\{\sum_{s=2}^{k+1}\!\sum_{i<i_1\leq i_2 \leq \cdots \leq i_{s-1}<i+m}(k\!+\!2\!-\!s)\cdot r^{k+1-s}a_{i,i_1}a_{i_1,i_2}\cdots a_{i_{s-1},i+m}\right\}.$$

\end{itemize}
Concluding, if $A(m,i)^{k+1}=A(m,i)$, then (ii) and (iii) hold.

Assume now that (i), (ii) and (iii) are satisfied for $A$. We shall prove that $A$ must be an $(k+1)$-potent matrix. Since the equation (\ref{eqth}) involves only the coefficients with indices $i_s$ such that $i\leq i_s\leq j$ it suffices to proved the claim for $A(m,i)$.

Since $a_{ii}\in\{0,1,\omega, \cdots, \omega^{k-1}\}$ and $\omega$ is a $kth$ root of unity we have that $a_{ii}^{k+1}=a_{ii}$.

Consider $m=1$. 
\begin{itemize}
    \item If  $a_{ii}\not=a_{i+1,i+1}$, then as it was checked in (\ref{eqm=1}) we have $A(1,i)^{k+1}=A(1,i)$.
    \item When $a_{ii}=a_{i+1,i+1}$ and $a_{i,i+1}$ satisfies (\ref{eqth}), we also have $A(1,i)^{k+1}=A(1,i)$.
\end{itemize}

Suposse that the claim holds for all $1\leq l\leq m-1$ and focus on $A(m,i)$. 
\begin{itemize}
\item If $a_{ii}\not=a_{i+m,i+m}$ and $a_{i,i+m}$ as in (\ref{eqth}). Consider 
$$A(m,i)=\left[ \begin{array}{ccc}
a_{ii}& a & a_{i,i+m}\\ 
&u&  b \\
 &&a_{i+m,i+m}
\end{array}
\right]
$$
as  in (\ref{eq11}), so from  (\ref{eq22}) we have that
$$A(m,i)^{k+1}=\left[ \begin{array}{ccc}
a_{ii}^{k+1} & a' &x'\\ 
&u^{k+1}& b'\\
 &&a_{i+m,i+m}^{k+1}
\end{array}
\right].
$$

By our assumption, $A(m-1,i)$ and $A(m-1,i+1)$ are $(k+1)$-potent for all $i$. So
$$A(m-1,i)=\left[\begin{array}{cc}
    a_{ii} &a  \\
    0 & u
\end{array}\right] \ \ \ and\ \ \  A(m-1,i+1)=\left[\begin{array}{cc}
    u &b  \\
    0 & a_{i+m,i+m}
\end{array}\right]$$
are $(k+1)$-potent. Hence
$$A(m-1,i)^{k+1}=\left[\begin{array}{cc}
    a_{ii}^{k+1} &a'  \\
    0 & u^{k+1}
\end{array}\right] \ \ \ and \ \ \ A(m-1,i+1)^{k+1}=\left[\begin{array}{cc}
    u^{k+1} &b'  \\
    0 & a_{i+m,i+m}^{k+1}
\end{array}\right],
$$
wich implies that $a=a',  b=b'$. Then, of the equation (\ref{eqmm}) we have $x'=a_{i,i+m}$.

\item Whereas, if $a_{ii}=a_{i+m,i+m}$ and $a_{i,i+m}$ satisfies (\ref{eqth}), then for the equation (\ref{eqm}), $x'=a_{i,i+m}$ as well. 
\end{itemize}
Summing up $A^{k+1}=A$

\end{proof}

\section{Counting triangular $(k+1)$-potent matrices}

In this section we used the Theorem \ref{th1} to count upper triangular $(k+1)$-potent matrices and prove our second result.


\begin{proof}[Proof of Theorem \ref{th2}] By Theorem \ref{th1}, the number of upper triangular $(k+1)$-potent matrices with elements of the set $D=\{0,1,\omega,\omega^2,\ldots,\omega^{k-1}\}$ on  the diagonal  depends entirely on which pairs of diagonal entries have $a_{ii}\not=a_{jj}$. To enumerate those possibilities, consider an integer column vector $d=(d_1,d_2,\ldots,d_n)$ the respective diagonal having each $d_i\in D$ and denote for $n_0,n_1,n_2,\cdots,n_k$ the numbers of $0,1,\omega,\omega^2,\cdots,\omega^{k-1}$ that appears in the diagonal respectively, such that $n_0+n_1+\cdots+n_k=n$ with $0\leq n_i$ for $i=0,1,\ldots,k$.

Denote by $\Delta$ the number de pairs $(d_i,d_j)$ with $i<j$ and $d_i\not= d_j$ and observe that
$$\Delta = \displaystyle \sum_{\substack{0\leq i<j \leq k }}n_i\cdot n_j.$$
In particular, $\Delta$ is independent of the order in which the elements of the set $D$ appear on $d$. Consequently we have on the diagonal  yields
$q^{\Delta} = q^{\displaystyle \sum_{\substack{0\leq i<j \leq k }}n_i\cdot n_j},$
possible upper triangular $(k+1)-$potent matrices.

Finally, all $d_i's$ can be put on our main diagonal on
$$\binom{n}{n_0}\binom{n-n_0}{n_1}\binom{n-n_0-n_1}{n_2}\cdots\binom{n-n_0-n_1-\cdots-n_{k-1}}{n_k}=\binom{n}{n_0,n_1,\ldots,n_k},$$
where
$$\binom{n}{n_0,n_1,\ldots,n_k}=\frac{n!}{n_0!\cdots n_k!}.
$$
Therefore, the total number of $n\times n$ upper triangular $(k+1)-$ potent matrices with elements the  set $\{0,1,\omega,\omega^2,\cdots,\omega^{k-1}\}$ on the diagonal is
$$ \displaystyle \sum_{\substack{n_0+\cdots+ n_k=n \\ 0\leq n_i}} \binom{n}{n_0n_2\cdots n_k}\cdot q^{\displaystyle \sum_{\substack{0\leq i<j \leq k }}n_i\cdot n_j}.
$$
\end{proof}

\section{Counting $(k+1)$-potent elements in $\mathcal{I}(X,\mathbb{K})$}
In this section, we fix the notation and recall some definitions and basic facts that will be used throughout the article. The notations and observations in this article are the same that in \cite{Ivan}.

We denoted by $X$ a nonempty finite partially ordered set (finite poset, for short) and its relation by $\leq$ and let $\mathbb{K}$ be a field. The incidence algebra $\mathcal{I}(X,\mathbb{K})$ of $X$ over $\mathbb{K}$ is the set $\mathcal{I}(X,\mathbb{K})=\{ f :X\times X\longrightarrow\mathbb{K}:\  f(x, y)=0 \ \text{if}\ x\not\leq y\}$, endowed with the usual product of a map by a scalar, the
usual sum of maps, and the product defined by
$$f\cdot g(x,y)=\sum_{x\leq t\leq y}f(x,t)g(t,y),$$
for any $ f, g \in \mathcal{I}(X,\mathbb{K})$.

Let $X=\{x_1,x_2,\ldots,x_n\}$ be a finite poset, the incidence algebra $\mathcal{I}(X,\mathbb{K})$ is a finite-dimensional linear space over $\mathbb{K}$, which has basis elements labelled $E_{ij}$ for each $i,j$ for wich $x_i\leq x_j$ and has multiplication defined via

$$
E_{ij}E_{kl}=\left\{\begin{array}{cc}
E_{ij}\ &,\  \text{if}\ j=k\\
0\ & ,\ \text{if}\ j\neq k
\end{array}
\right.
$$

It is not hard to see that we cant identify $\mathcal{I}(X,\mathbb{K})$ with a subalgebra of the full matrix algebra $M_n(\mathbb{K})$.

If $X=\{x_1, x_2,\ldots,x_n\}$ and the relation: $x_i\leq x_j$ if $i\leq j$. This   poset is called chain of length $n$. The Hasse diagram of this poset is

\begin{center}
\includegraphics[scale= 0.5]{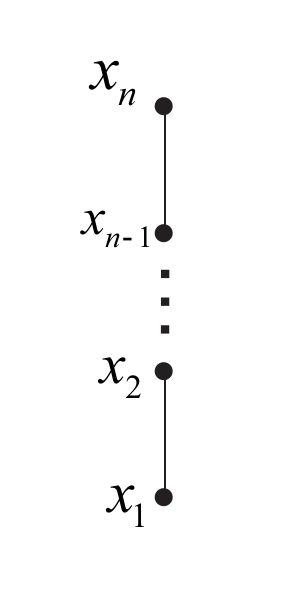}
\end{center}
and  $\mathcal{I}(X,\mathbb{K})\cong UT_n(\mathbb{K})$  the algebra of  upper triangular matrices.

Let $g\in \mathcal{I}(X,\mathbb{K})$, then $g$ is an $(k+1)$-potent element if $g^{k+1}=g$. For more information's about incidence algebras we recommended \cite{spi2}. 



\begin{proof}[Proof of Theorem \ref{teo1}]

The Hasse diagram of both posets are :

\begin{figure}[H]
\centering
{\resizebox*{10cm}{!}{\includegraphics{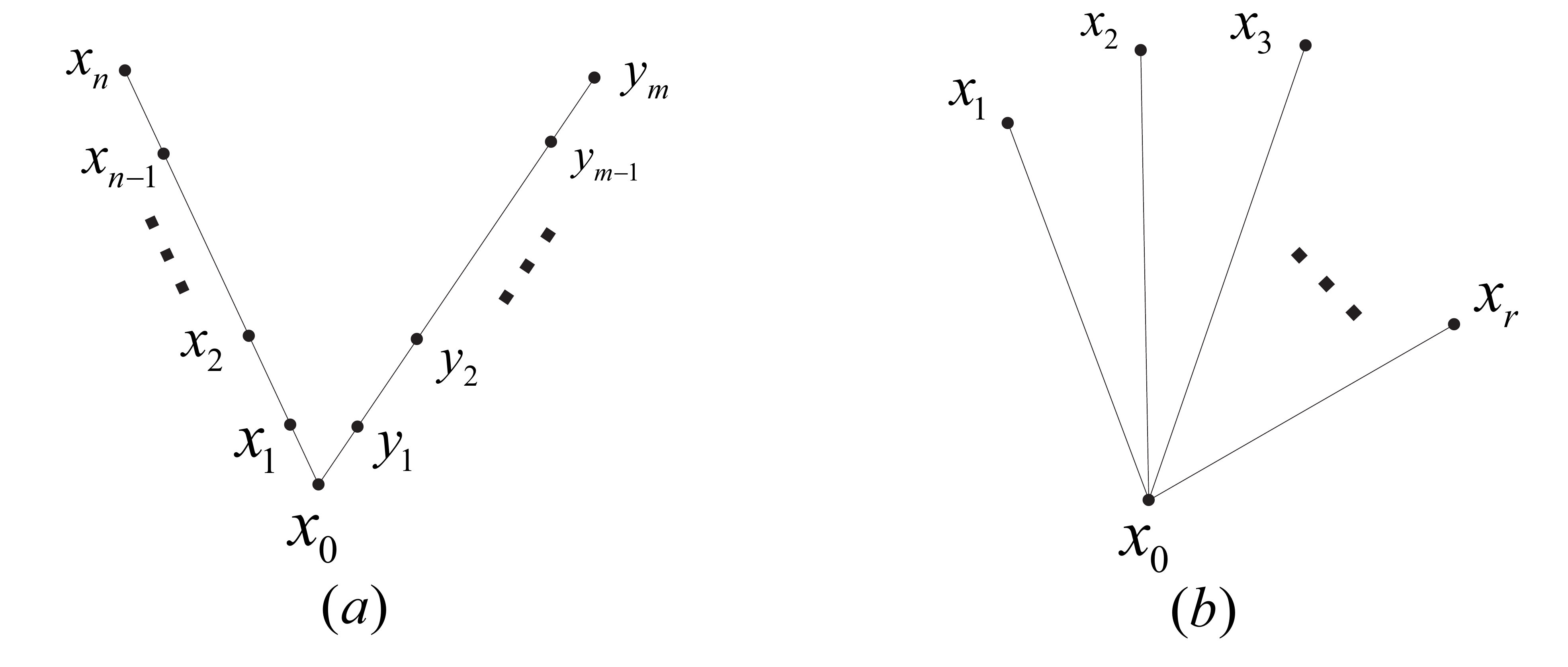}}}
 \label{graph4}
\end{figure}

Consider the poset of the figure ($a$) and suppose that there are $q_1,q_2,\cdots,q_s$ solutions in $\mathbb{K}$ of the equations $x^{k+1}=x$. The elements in this incidence algebra have the block matrix form

$$
g=\left[\begin{array}{c|c|c}
a_0 & \alpha & \theta \\ \hline
0 & R & 0 \\ \hline
0 & 0  & S

\end{array}\right]
$$
with $\alpha=(a_1,a_2,\cdots, a_n)$, $\theta=(\theta_1,\theta_2,\cdots, \theta_m)$, $R=(a_{ij})\in UT_n(\mathbb{K})$ and $S=(d_{ij})\in UT_m(\mathbb{K}).$

Suppose that $a_0=q_1$ and denote by
\begin{itemize}
    \item [(i)] $n_1,n_2,\cdots, n_s$ the number of $q_1,q_2,\cdots, q_s$ that appears in the diagonal of sub-matrix $R$, such that $n_1+n_2+\cdots+n_s=n$ and $0\leq n_i, \ i=1,2,\cdots,s$.
    
    \item [(ii)] $m_1,m_2,\cdots, m_s$ the number of $q_1,q_2,\cdots, q_s$ that appears in the diagonal of sub-matrix $S$, such that $m_1+m_2+\cdots+m_s=m$ and $0\leq m_i, \ i=1,2,\cdots,s$.
\end{itemize}
Then, in this case, using the matrix representation apply in \cite{Ivan}, we obtain that

 $$\begin{array}{ccccc}   & \overbrace{\begin{tabular}{|c|c|}  \hline $a_0$ & $(1, 0, \cdots, 0)$ \\ \hline
  $d_R$  & $n_1,n_2,\cdots,n_s$ \\ \hline  $d_S$ & $m_1,m_2,\cdots,m_s$ \\  \hline
\end{tabular}}^{Number   \ of \ q_1,q_2,\cdots, q_s  \ in:}  &  &  \left[\begin{array}{c|c|c}  0 & \displaystyle\sum_{i=2}^s n_i  & \displaystyle\sum_{i=2}^s m_i \\ \hline 0 & \Delta(d_R) & 0    \\ \hline 0 & 0 & \Delta(d_S) \end{array}\right]\end{array}$$
So, in this case we have

$$\binom{n}{n_1n_2\cdots n_s}\binom{m}{m_1m_2\cdots m_s} q^{\displaystyle \sum^s_{i=2}n_i+ \Delta(d_R)}\cdot q^{\Delta(d_S)}\cdot q^{\displaystyle \sum^s_{i=2}m_i}$$
but $\displaystyle \sum^s_{i=2}n_i+ \Delta(d_R)=\Delta(d_A)$ with $A=\left[\begin{array}{c|c} a_0 & \alpha \\ \hline 0 & R \end{array}\right]$
then, substituting in the formula above we have
$$\binom{n}{n_1n_2\cdots n_s}\binom{m}{m_1m_2\cdots m_s} q^{ \Delta(d_A)}\cdot q^{\Delta(d_S)}\cdot q^{\displaystyle \sum^s_{i=2}m_i}.$$
Combining this situation, we have
$$\displaystyle \sum_{\substack{n_1+\cdots +n_s=n \\ 0\leq n_i}}\sum_{\substack{m_1+\cdots +m_s=m \\ 0\leq m_i}}\binom{n}{n_1n_2\cdots n_s}\binom{m}{m_1m_2\cdots m_s} q^{ \Delta(d_A)}\cdot q^{\Delta(d_S)}\cdot q^{\displaystyle \sum^s_{i=2}m_i}= $$

$$\ \ \displaystyle \sum_{\substack{n_1+\cdots +n_s=n \\ 0\leq n_i}}\binom{n}{n_1n_2\cdots n_s}q^{ \Delta(d_A)} \left\{\sum_{\substack{m_1+\cdots +m_s=m \\ 0\leq m_i}}\binom{m}{m_1m_2\cdots m_s} q^{\Delta(d_S)}\cdot q^{\displaystyle \sum^s_{i=2}m_i}\right\}.$$
Observe that, by the Theorem \ref{th2}
$$\mathcal{N}(n+1,k+1,\mathbb{K})=\displaystyle \sum_{\substack{n_1+n_2+\cdots +n_s=n \\ 0\leq n_i}}\binom{n}{n_1n_2\cdots n_s}q^{ \Delta(d_A)}$$
and denote by 
$$P(m,k+1)=\sum_{\substack{m_1+m_2+\cdots +m_s=m \\ 0\leq m_i}}\binom{m}{m_1m_2\cdots m_s} q^{\Delta(d_S)}\cdot q^{\displaystyle \sum^s_{i=2}m_i},$$
then
$$ \mathcal{N}(X,k+1,\mathbb{K})=\mathcal{N}(n+1,k+1,\mathbb{K})\cdot P(m,k+1).$$
In general, consider the poset of the figure ($b$) then the proof follow similarly to \cite{Ivan} and we conclude that
$$\mathcal{N}(X,k+1,\mathbb{K})=\mathcal{N}(n+1,k+1,\mathbb{K})\cdot P(m_2,k+1)\cdots P(m_r,k+1).$$
\end{proof}


\begin{proof}[Proof of Theorem \ref{teo2}]
The Hasse diagram of Rhombuses poset is:

\begin{figure}[H]
\centering
{\resizebox*{6cm}{!}{\includegraphics{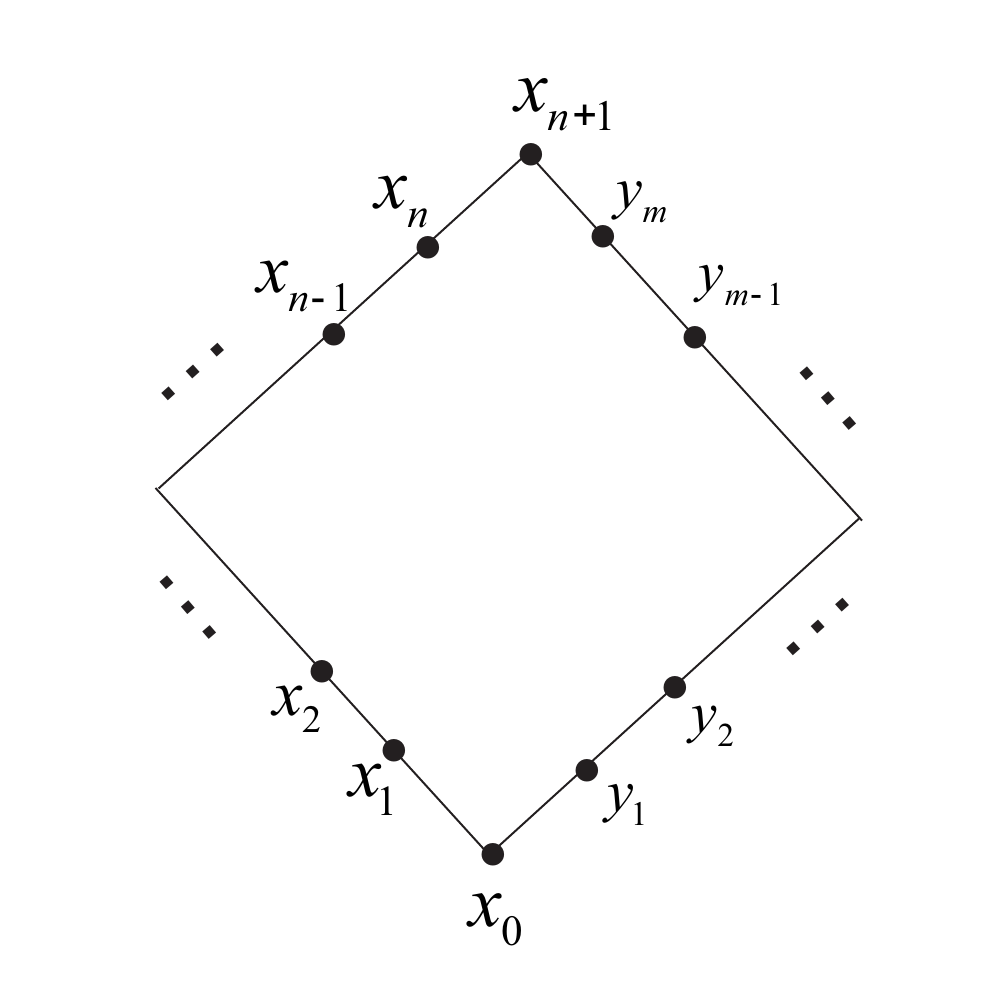}}}
\label{graph6}
\end{figure}

We know that the elements in this incidence algebra have the form:
$$\left[\begin{array}{c|c|c|c}
a_0 & \alpha & \theta & \theta_{n+1} \\ \hline
0   &  R     &   0    & \beta \\ \hline
0 &    0     &   S   & d \\ \hline
0   &  0    &    0   &  d_{(n+1)(n+1)}\end{array}\right]$$
where $\alpha=(\alpha_1,\alpha_2, \cdots ,\alpha_n)$, $\theta=(\theta_1 ,\theta_2 , \cdots  ,\theta_m)$ , $\beta=(\beta_1,\beta_2,\cdots , \beta_n)^t$, $R=(a_{ij})\in UT_n(\mathbb{K}) $ and $S=(d_{ij})\in UT_m(\mathbb{K})$. Consider that there are $q_1,q_2,\cdots, q_s$ different elements in $\mathbb{K}$ such that $x^{k+1}=x$. Suppose that $a_0=q_1$ and denote by 
\begin{itemize}
    \item [(i)] $n_1,n_2, \cdots, n_s$ the number of $q_1,q_2,\cdots, q_s$ that appears in the diagonal of the matrix $R$ respectively, such that $n_1+n_2+\cdots +n_s=n$, with $0\leq n_i$, $i=1,2,\cdots, s.$
    
    \item [(ii)] $m_1,m_2, \cdots, m_s$ the number of $q_1,q_2,\cdots, q_s$ that appears in the diagonal of the matrix $S$ respectively, such that $m_1+m_2+\cdots +m_s=m$, with $0\leq m_i$, $i=1,2,\cdots, s.$
    \end{itemize}
    
    Denote by
    $$\Delta(d_R)= \displaystyle \sum_{\substack{0\leq i,j \leq s \\ i<j}}n_i\cdot n_j\ \ and \ \ \Delta(d_S)= \displaystyle \sum_{\substack{0\leq i,j \leq s \\ i<j}}m_i\cdot m_j,$$
    then, consider the $s$ choices for $d_{(n+1)(n+1)}$ and using the matrix representation explain in \cite{Ivan} we obtain 
    
    \begin{itemize}
        \item [(a)] If $d_{(n+1)(n+1)}=q_1$ then

      $$\begin{array}{ccccc}   & \overbrace{\begin{tabular}{|c|c|}\hline
  $d_R$  & $n_1,n_2,\cdots,n_s$ \\ \hline  $d_S$ & $m_1,m_2,\cdots,m_s$ \\ \hline
   $d_{(n+1)(n+1)}$  & $1,0,\cdots, 0$ \\ \hline
\end{tabular}}^{Number   \ of \ q_1,q_2,\cdots, q_s \ in:}  &  &  \left[\begin{array}{c|c|c|c}  0 & \displaystyle\sum^s_{i=2}n_i & \displaystyle\sum^s_{i=2}m_i & 0 \\ \hline 0 & \Delta(d_R) & 0 & \displaystyle\sum^s_{i=2}n_i    \\ \hline 0 & 0 & \Delta(d_S) &  \displaystyle\sum^s_{i=2}m_i\\  \hline 0 & 0 & 0 & 0 \end{array}\right]\end{array}$$ 
So, in this case we have
$$\binom{n}{n_1n_2\cdots n_s}\binom{m}{m_1m_2\cdots m_s}q^{\Delta(d_R)+\Delta(d_S)}\cdot q^{2(\displaystyle\displaystyle\sum^s_{i=2}n_i+\sum^s_{i=2}m_i)}.$$

    \item[(b)] If $d_{(n+1)(n+1)}=q_j$ with $j=2,3,\cdots, s$ then

 $$\begin{array}{ccccc}   & \overbrace{\begin{tabular}{|c|c|}\hline
  $d_R$  & $n_1,n_2,\cdots,n_s$ \\ \hline  $d_S$ & $m_1,m_2,\cdots,m_s$ \\ \hline
   $d_{(n+1)(n+1)}$  & $1,0,\cdots, 0$ \\ \hline
\end{tabular}}^{Number   \ of \ q_1,q_2,\cdots, q_s \ in:}  &  &  \left[\begin{array}{c|c|c|c}  0 & \displaystyle\sum^s_{i=2 }n_i & \displaystyle\sum^s_{i=2}m_i & 1 \\ \hline 0 & \Delta(d_R) & 0 & \displaystyle\sum^s_{i=1,i\neq j}n_i    \\ \hline 0 & 0 & \Delta(d_S) &  \displaystyle\sum^s_{i=1, i\neq j}m_i\\  \hline 0 & 0 & 0 & 0 \end{array}\right]\end{array}$$ 

So, in this case we have
$$\binom{n}{n_1n_2\cdots n_s}\binom{m}{m_1m_2\cdots m_s}q^{\Delta(d_R)+\Delta(d_S)}\cdot q\cdot q^{\displaystyle\sum^s_{i=2 }(n_i+m_i)+ \sum^s_{i=1,i\neq j}(n_i +m_i)}.$$
    \end{itemize}
Finally, combining all situations, we have

$$s\cdot \displaystyle \sum_{\substack{n_1+\cdots+ n_s=n \\ 0\leq n_i}} \sum_{\substack{m_1+\cdots +m_s=m \\ 0\leq m_i}} \binom{n}{n_1n_2\cdots n_s}\binom{m}{m_1m_2\cdots m_s}\cdot q^{\Delta(d_R)+\Delta(d_D)}\times$$ $$\times q^{\displaystyle\sum^s_{i=2 }(n_i+m_i)}\times\left\{q^{\displaystyle\sum^s_{i=2 }(n_i+m_i)}+ q\cdot \sum_{j=2}^s q^{\displaystyle\sum^s_{i=1,i\neq j}(n_i +m_i)}\right\},$$
$(k+1)$-potent elements in $\mathcal{I}(X, \mathbb{K}).$

\end{proof}


\begin{proof}[Proof of Theorem \ref{teo3}]
The Hasse diagram for this poset is
\begin{figure}[H]
\centering
{\resizebox*{5,5cm}{!}{\includegraphics{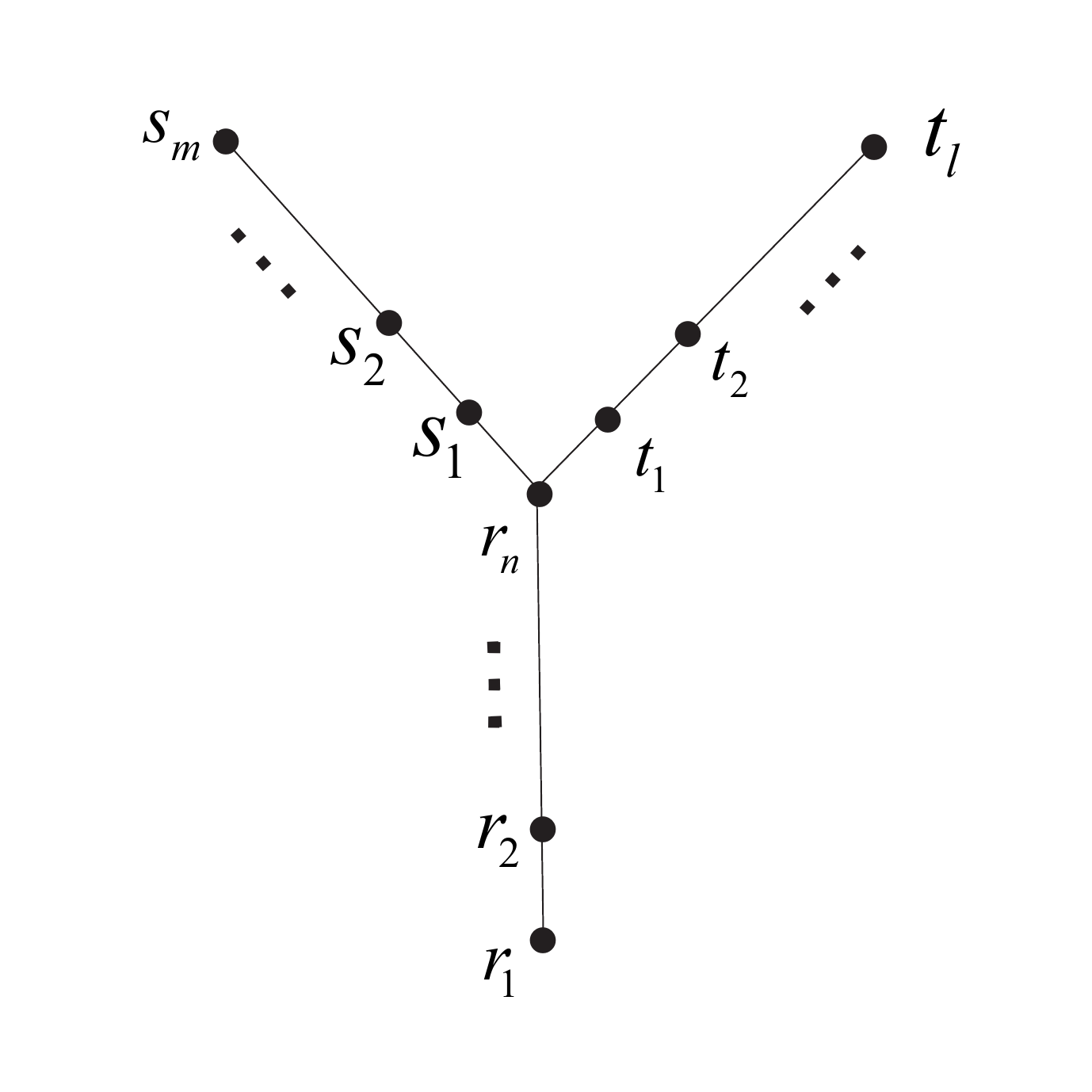}}}
\label{graph7}
\end{figure}
The elements in the incidence algebra $\mathcal{I}(Y,\mathbb{K})$ are in the matrix form:

$$\left[\begin{array}{c|c|c} R & *_1 & *_2 \\ \hline 0 & S & 0 \\ \hline 0 & 0 & T\end{array}\right],$$ 
with $R\in UT_n(\mathbb{K})$, $S\in UT_m(\mathbb{K})$, $T\in UT_l(\mathbb{K})$, $*_1 \in M_{n\times m}(\mathbb{K})$ and $*_2\in M_{n \times l}(\mathbb{K})$. 

In geral, suppose that we have $q_1,q_2,\cdots ,q_s$ different elements in $\mathbb{K}$ such that $x^{k+1}=x$ and denote by
\begin{itemize}
    \item [(i)] $n_1,n_2,\cdots, n_s$ the number of $q_1,q_2,\cdots,q_s $ that appears in the diagonal $d_R$ such that $n_1+n_2+\cdots +n_s=n$, where $0\leq n_i$, $i=1,2,\cdots, s$.
     \item [(i)] $m_1,m_2,\cdots, m_s$ the number of $q_1,q_2,\cdots,q_s $ that appears in the diagonal $d_S$ such that $m_1+m_2+\cdots +m_s=m$, where $0\leq m_i$, $i=1,2,\cdots, s$.
      \item [(i)] $l_1,l_2,\cdots, l_s$ the number of $q_1,q_2,\cdots,q_s $ that appears in the diagonal $d_T$ such that $l_1+l_2+\cdots +l_s=l$, where $0\leq l_i$, $i=1,2,\cdots, s$.
    \end{itemize}

Then, using the matrix representation explain in \cite{Ivan} we obtain that

$$\begin{array}{ccccc}   & \overbrace{\begin{tabular}{|c|c|} \hline $d_R$ & $(n_1,n_2,\cdots,n_s)$ \\ \hline $d_S$ & $(m_1,m_2,\cdots,m_s)$ \\  \hline $d_T$ & $(l_1,l_2,\cdots,l_s)$ \\ \hline \end{tabular}}^{Number   \ of \ q_1,q_2 \ and \ q_3 \ in:}  &  &  \left[\begin{array}{c|c|c} \Delta(d_R) &  \displaystyle\sum^{s}_{\substack{0\leq,i,j\leq s \\ i \neq j}} n_i\cdot m_j & \displaystyle\sum^{s}_{\substack{0\leq,i,j\leq s \\ i \neq j}} n_i\cdot l_j  \\ \hline 0 & \Delta(d_S) & 0 \\ \hline 0 & 0 & \Delta(d_T)\end{array}\right]\end{array},$$
where
$$\Delta(d_R)= \displaystyle\sum^{s}_{\substack{0\leq,i,j\leq s \\ i < j}} n_i\cdot n_j, \ \Delta(d_S)= \displaystyle\sum^{s}_{\substack{0\leq,i,j\leq s \\ i < j}} m_i\cdot m_j \ and \  \Delta(d_T)= \displaystyle\sum^{T}_{\substack{0\leq,i,j\leq s \\ i < j}} l_i\cdot l_j$$
    
Combining all situations, we  have

$$\displaystyle \sum_{\substack{n_1+n_2+\cdots +n_s=n  \\ m_1+m_2+\cdots+m_s=m \\ l_1+l_2+\cdots +l_s=l \\ 0\leq n_i,m_i,l_i, \ i=1,2,\cdots, s}} \binom{n}{n_1n_2\cdots n_s}\binom{m}{m_1m_2\cdots m_s} \binom{l}{l_1l_2\cdots l_s}\cdot q^{\Delta(d_R)+\Delta(d_S)+\Delta(d_T)}\times$$
$$\times\ q^{\displaystyle\sum^{s}_{\substack{0\leq,i,j\leq s \\ i \neq j}} n_i\cdot m_j+\displaystyle\sum^{s}_{\substack{0\leq,i,j\leq s \\ i \neq j}} n_i\cdot l_j },$$
$(k+1)$-potent elements in $\mathcal{I}(X, \mathbb{K}).$


\end{proof}

\section{Remark}
Slowik in \cite{Slowik} proof the similar result to Theorem \ref{th2} but for triangular matrices of finite order:

\begin{theorem}\label{sw2} Let $h \in \mathbb{N}$ and let $\mathbb{K}$ be a finite field such that
$char\mathbb{K}\nmid h$ and $|\mathbb{K}| = q$. If $\mathbb{K}$ contains $l$ elements $d$ such that $d^h = 1$, then $T_n(\mathbb{K})$ contains
\begin{equation}\label{equaslowik}
\!\!\mathcal{N}(n,h,\mathbb{K})\!=\!\!\!\!\displaystyle\sum_{j=1}^{\min(l,n)}\!\!\!\! \displaystyle\sum_{\substack{m_1\leq m_2\leq\!\cdots\!\!\leq m_j \\ m_1+\!\cdots\!+ m_j=n}}\!\!\!\!\frac{l!}{(l\!-\!j)! \!\cdot\! g(m_1,\cdots,m_j)}\binom{n}{m_1,\!\!\cdots\!\!,m_j}q^{\frac{1}{2}(n^2\!-\!\sum_{u=1}^j m^2_u)}, 
\end{equation}
elements $g$ satisfying $g^h = e$, where $e$ is the identity matrix in $T_n(\mathbb{K})$ and $g(m_1,\!\cdots\!, m_ j)\! =\! r_1!\cdots r_s!$ if and only if
$$m_1 = m_2 = \cdots = m_{r_1} \neq m_{r_1+1} = m_{m_1+2} = \cdots = m_{r_1+r_2} \neq m_{r_1+r_2+1} =\cdots.$$
\end{theorem}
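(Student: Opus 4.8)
The plan is to mirror the two-stage strategy already used for Theorem \ref{th2}: first determine the rigid-versus-free structure of a solution once its diagonal is fixed, then enumerate combinatorially over all admissible diagonals, applying throughout the block-power identities of Lemma 2.1 of \cite{Slowik} with $k+1=h$. I would begin by describing the solution set of $g^h=e$ in $T_n(\mathbb{K})$ in direct analogy with Theorem \ref{th1}. Writing $g=(g_{ij})$ and comparing the diagonal of $g^h$ with that of $e$ forces $g_{ii}^{h}=1$, so each diagonal entry is one of the $l$ roots of $x^{h}=1$ in $\mathbb{K}$ (in contrast with the $(k+1)$-potent case, the value $0$ is now excluded). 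For the strictly upper part I would induct on $j-i$ exactly as in the proof of Theorem \ref{th1}: when $g_{ii}=g_{jj}$ the entry $g_{ij}$ is uniquely determined by the entries lying strictly between columns $i$ and $j$, its coefficient being $h\cdot g_{ii}^{\,h-1}$, which is invertible precisely because $g_{ii}\neq 0$ and $\mathrm{char}\,\mathbb{K}\nmid h$; when $g_{ii}\neq g_{jj}$ the corresponding equation is vacuous and $g_{ij}$ is free. Hence, for a fixed diagonal $d=(g_{11},\dots,g_{nn})$, the number of completions of $g$ to a solution is $q^{N(d)}$, where $N(d)$ counts the pairs $i<j$ with $g_{ii}\neq g_{jj}$.

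Next I would compute $N(d)$ from the block structure of $d$. If the distinct diagonal values occurring in $d$ have multiplicities $m_1,\dots,m_j$ (so $m_1+\cdots+m_j=n$ and $j$ roots are used), the pairs with equal diagonal entries number $\sum_u\binom{m_u}{2}$, whence
\[
N(d)=\binom{n}{2}-\sum_{u=1}^{j}\binom{m_u}{2}=\tfrac12\Big(n^{2}-\sum_{u=1}^{j}m_u^{2}\Big),
\]
which is exactly the exponent of $q$ in \eqref{equaslowik} and is independent of how the values are arranged along the diagonal.

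It remains to enumerate the diagonals realizing a prescribed multiset of multiplicities. Fixing $j$ and a nondecreasing composition $m_1\le\cdots\le m_j$ of $n$, I would count the diagonals whose nonzero fibre sizes form the multiset $\{m_1,\dots,m_j\}$: choosing an ordered $j$-tuple of distinct roots ($\frac{l!}{(l-j)!}$ ways) together with an ordered partition of the $n$ positions into blocks of sizes $m_1,\dots,m_j$ ($\binom{n}{m_1,\dots,m_j}$ ways) produces each such diagonal exactly $g(m_1,\dots,m_j)=r_1!\cdots r_s!$ times, the overcount arising from permuting blocks of equal size together with their attached roots. Dividing by this factor and multiplying by $q^{N(d)}$ gives the summand of \eqref{equaslowik}; summing over $1\le j\le\min(l,n)$ and over all such compositions yields the total.

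The main obstacle is the structural step: proving rigorously that the number of free parameters is exactly $N(d)$, that is, the uniqueness of the forced entries whenever $g_{ii}=g_{jj}$. This is the inductive heart of the argument and again hinges on the invertibility of the coefficient $h\cdot g_{ii}^{\,h-1}$, which is where the hypothesis $\mathrm{char}\,\mathbb{K}\nmid h$ enters. The only other delicate point is the combinatorial bookkeeping in the last step, namely the correct identification of the overcounting factor $g(m_1,\dots,m_j)$.
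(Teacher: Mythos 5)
Your proposal is correct, but it does not coincide with the paper's treatment for a simple reason: the paper never proves Theorem \ref{sw2}. That result is imported verbatim from Slowik \cite{Slowik}, and Section 5 only argues, by an informal regrouping of terms, that Slowik's expression (\ref{equaslowik}) can be rewritten in the same multinomial shape as the formula of Theorem \ref{th2} -- while stressing that the two formulas count \emph{different} classes of matrices, since a $(k+1)$-potent matrix may have zeros on its diagonal whereas a solution of $g^h=e$ cannot. What you do instead is supply a self-contained proof of Slowik's theorem by transplanting the paper's own two-stage method (the structural dichotomy of Theorem \ref{th1}, then the diagonal enumeration of Theorem \ref{th2}) to the equation $g^h=e$. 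The adaptation is sound: the diagonal entries must be $h$-th roots of unity, of which there are $l$; when $g_{ii}=g_{jj}=r$ the coefficient of $g_{ij}$ in the equation $(g^h)_{ij}=0$ is $h\,r^{h-1}$, invertible precisely because $r\neq 0$ and $\mathrm{char}\,\mathbb{K}\nmid h$, so $g_{ij}$ is forced by the interior entries; your identity $\binom{n}{2}-\sum_{u}\binom{m_u}{2}=\tfrac12\bigl(n^2-\sum_{u}m_u^2\bigr)$ gives exactly Slowik's exponent; and your count of diagonals, $\frac{l!}{(l-j)!}\binom{n}{m_1,\dots,m_j}$ divided by the overcount $g(m_1,\dots,m_j)=r_1!\cdots r_s!$, is the correct coefficient. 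Your route buys an actual proof of the cited theorem, organized directly in Slowik's grouped form, and it makes rigorous the equivalence that the paper's remark only asserts; the paper's route buys brevity at the cost of leaving the theorem unproved within the article.

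One point you should tighten: when $g_{ii}\neq g_{jj}$ the equation $(g^h)_{ij}=0$ is not literally ``vacuous.'' The coefficient of $g_{ij}$ vanishes because $\sum_{a+b=h-1}g_{ii}^{a}g_{jj}^{b}=(g_{ii}^{h}-g_{jj}^{h})/(g_{ii}-g_{jj})=0$ for distinct $h$-th roots of unity, but the remaining terms, which involve the interior entries, must still be shown to cancel. This follows from the same telescoping manipulation the paper uses in (\ref{eqmm}), combined with the inductive hypothesis applied to the two maximal proper corner submatrices -- which in the torsion setting yields $a'=0$ and $b'=0$ (rather than $a'=a$, $b'=b$ as in the potent case). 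So the freeness of $g_{ij}$ is a consequence of the induction, not an empty condition; with that step spelled out, your count of $q^{N(d)}$ completions per diagonal, and hence the whole argument, is complete.
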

It is not difficult to observe that  the  formula in (\ref{equaslowik}) can be rewritten as
\begin{equation}\label{eq2}
\mathcal{N}(n,h,\mathbb{K})=\displaystyle\sum_{\substack{n_1+n_2+\cdots+n_l=n \\ 0\leq n_i, \ i=1,2,\cdots,l}}\binom{n}{n_1n_2\cdots n_l}q^{\displaystyle\sum_{1\leq i<j\leq l}n_in_j}.
\end{equation}

For example, for $j=1$, the formula (\ref{equaslowik}) corresponding to count all combinations of $ n_i=n$ and $n_j=0$, with $i\neq j$. If consider $n_1=n$ and $n_2=\cdots =n_l=0$ then we have $\binom{n}{n}q^0=1$. Combining all situations, in total we have $1+1+\cdots+1=l$ of this elements.
For $j=2$ in the Slowik formula for the case that $m_1+m_2=n$ corresponding to count all combinations of $n_i=m_1, n_j=m_2$ and $n_k=0$ with $i\neq j$, $j\neq k$ and $i\neq k$. In this case we have $\binom{n}{(n-2),g}$ where $g=2$ if $n_1=n_2$ and $g=1$ in other cases.
In general, for $j<n$ fixed in the Slowik's formula we have $m_1+m_2+m_3+\cdots+m_j=n$ that corresponding to count all combinations for $n_{i_1}=m_1, n_{i_2}=m_2, \cdots, n_{i_j}=m_j$ and $n_{i_k}=0$ in other cases, with $i_s \in \{1,2,\cdots, n\}$ and $i_s\neq i_t$ if $s\neq t$, here we have $\binom{n}{(n-j)g}$ for this combinations, where $g$ is the variable that appears in the Theorem \ref{sw2}.

In conclusion, the Slowik's formula that appears in the Theorem \ref{sw2} is equals to formula of the Theorem \ref{th2}, both formula are compute for different classes of matrices. The authors in a future article will explore these remark, for instance, the authors proofs an similar remark but for full matrix groups (to appears).
\newline



\section{Tables presentations}
Here we present some examples of triangular matrices satisfying the Theorem  \ref{th1} where $a,b,c, d, e, f ,g,h, l$ are arbitrary and $X$ is the dependent variable.
\begin{table}[H]
\caption{Case $k=4$, $n=3$ and $1+\omega+\omega^2+\omega^3=0$ }
\centering
\begin{tabular}{|c||c||c|}\hline
$
\begin{array}{c}
\left[\begin{array}{ccc}
  1&a&b  \\
0&\omega^2&b  \\
  0&0&\omega^3
 \end{array}
\right]
\end{array}
$& 
$\begin{array}{c}
\left[\begin{array}{ccc}
  \omega^3&a&X  \\
0&\omega^2&b  \\
  0&0&\omega^3
 \end{array}
\right]\\
\\
 X= -\frac{1}{4} (2+2\omega)ab
 \end{array}$
 &
$\begin{array}{c}
\left[\begin{array}{ccc}
  \omega&a&X  \\
0&\omega^3&b  \\
  0&0&\omega
 \end{array}
\right]\\
\\
X=  -\frac{1}{4} (2\omega^3)ab
 \end{array}$ \\ \hline
&&\\
$\begin{array}{c}
\left[\begin{array}{ccc}
  1&a&X  \\
0&\omega^3&b  \\
  0&0&1
 \end{array}
\right]\\
\\
X= -\frac{1}{4} (2+2\omega^3)ab
\end{array}$&
$\begin{array}{c}
\left[\begin{array}{ccc}
  0&a&X  \\
0&\omega^2&b  \\
  0&0&0
 \end{array}
\right]\\
\\
X= -\frac{1}{4}(-4\omega^2)ab
\end{array}$&

$\begin{array}{c}
\left[\begin{array}{ccc}
  1&a&X  \\
0&\omega^2&b  \\
  0&0&1
 \end{array}
\right]\\
\\
X=  -\frac{1}{4} (2)ab
\end{array}$
\\  \hline
&&\\
$
\begin{array}{c}
\left[\begin{array}{ccc}
  \omega^2&a&X  \\
0&0&b  \\
  0&0&\omega^2
 \end{array}
\right]\\
\\
X= -\frac{1}{4}(4\omega^2)ab
\end{array}$&$
\begin{array}{c}
\left[\begin{array}{ccc}
  \omega&a&X  \\
0&\omega^3&b  \\
  0&0&\omega
 \end{array}
\right]\\
\\
X= -\frac{1}{4} (2\omega^3)ab
\end{array}
$&$
\begin{array}{c}
\left[\begin{array}{ccc}
  1&a&X  \\
0&0&b  \\
  0&0&1
 \end{array}
\right]\\
\\
X=  -\frac{1}{4} (4)ab
\end{array}
$ \\
\hline
\end{tabular}
\end{table}

\begin{table}[H]
\caption{Case $k=3$, $n=4$ and $1+\omega+\omega^2=0$}
\centering
\begin{tabular}{|c||c|}\hline
&\\
$\begin{array}{c}
\left[\begin{array}{cccc}
  \omega&0&a&b   \\
0&\omega&c&d   \\
  0&0&1& 0 \\
0  &0&0&1
 \end{array}
\right]
\end{array}$& $\begin{array}{c}
\left[\begin{array}{cccc}
  1&a&d&X   \\
0&\omega&b&e   \\
  0&0&\omega^2& c \\
0  &0&0&1
 \end{array}
\right]\\
\\
\begin{array}{rcl}
  \!\!\!  X\!\!\! &\!\!=\!\!&\!\!-\frac{1}{3}\left[(2+\omega)ae+(2+\omega^2)dc+\ abc\right]
\end{array}
\end{array}$\\  \hline
&\\
$\begin{array}{c}
\left[\begin{array}{cccc}
 \omega&a&d&e   \\
0&\omega^2&b&X   \\
0&0&1&c \\
0 &0&0&\omega^2  
 \end{array}
\right]\\
\\
X= -\frac{1}{3}(2\omega+\omega^2)bc
\end{array}$&
$\begin{array}{c}
\left[\begin{array}{cccc}
 \omega&a&d&X   \\
0&\omega^2&b&e   \\
0&0&1& c \\
0  &0&0&\omega  
 \end{array}
\right]\\
\\
\begin{array}{rcl}
 \!\! \!\!\!  X &\!\!=\!\!&\!\!-\frac{1}{3}\left[(1+2\omega^2)ae+(\omega+2\omega^2)dc+ \omega\ abc\right] 
\end{array}
\end{array}$
\\  \hline
&\\

$\begin{array}{c}
\left[\begin{array}{cccc}
 \omega&a&X&c   \\
0&1&b&d   \\
 0&0&\omega& e \\
0  &0&0&\omega^2  
 \end{array}
\right]\\
\\
X=  -\frac{1}{3}(\omega+2\omega^2)ab
 \end{array}$
&
$\begin{array}{c}
\left[\begin{array}{cccc}
  \omega&a&d&X   \\
0&\omega^2&0&e   \\
  0&0&\omega^2& c \\
0  &0&0&\omega
 \end{array}
\right]\\
\\
X= -\frac{1}{3}\left[(1+2\omega^2)ae+(1+2\omega^2)dc\right]
\end{array}$\\ \hline
\end{tabular}
\end{table}

\begin{table}[H]
\caption{Case $k=4$, $n=5$ and $1+\omega+\omega^2+\omega^3=0$}
\centering
\begin{tabular}{|c||c|}\hline
&\\
$
\left[\begin{array}{ccccc}
  \omega&a&b&c &j  \\
0&\omega^3&d&e &f  \\
  0&0&\omega^2& g& h \\
0  &0&0&1 &l  \\
 0 &0&0&0 &0  
 \end{array}
\right]
$& 
$\begin{array}{c}
\left[\begin{array}{ccccc}
  \omega^3&a&X&b &c  \\
0&\omega^2&d&e &f  \\
 0&0&\omega^3& g& h \\
0  &0&0&1 &l  \\
 0 &0&0&0 &\omega  
 \end{array}
\right] \\
 \\
X=  -\frac{1}{4}(2+2\omega)ad
 \end{array}$\\  \hline
&\\
$\begin{array}{c}
\left[\begin{array}{ccccc}
 \omega^3&a&b&c &d  \\
0&\omega^3&e&X &f  \\
 0&0&\omega^2& g&h \\
0  &0&0&\omega^3 &l  \\
 0 &0&0&0 &1  
 \end{array}
\right]\\
\\
 X=  -\frac{1}{4}(2\omega^2+2\omega^3)hl
\end{array}$
&
$\begin{array}{c}
\left[\begin{array}{ccccc}
  \omega^3&a&b&c &d  \\
0&\omega^3&e&X &f  \\
  0&0&\omega^2& g& h \\
0  &0&0&\omega^3 &l  \\
 0 &0&0&0 &1  
 \end{array}
\right]\\
\\
X= -\frac{1}{4}(2+2\omega)eg
\end{array}$
\\  \hline
&\\
$\begin{array}{c}
\left[\begin{array}{ccccc}
  0&a&b&X &c  \\
0&\omega^3&d&e &f  \\
0&0&\omega& g&h \\
0  &0&0&0 &l  \\
 0 &0&0&0 &1  
 \end{array}
\right]\\
\\
X= \omega\ ae +\omega^3\ bg+ 
 \omega^2\ adg
\end{array}$& 
$\begin{array}{c}
\left[\begin{array}{ccccc}
  \omega&a&b&c &X  \\
0&\omega^3&d&e &f  \\
 0&0&\omega^2& g&h \\
0  &0&0&1 &l  \\
 0 &0&0&0 &\omega  
 \end{array}
\right]\\
\\
\begin{array}{rcl}
\!\!X&\!=\!& -\frac{1}{4}\left[ 2\omega^3\ af +(2+2\omega^3)bh++(2\omega^2+2\omega^3)cl\right.\\
&&\\
&&+\left.(\omega^2+\omega^3)adh+(\omega+\omega^2)ael+2\omega^2\ bgl\right.\\
&&\\
&&\left.+\omega\ adgi\right]
\end{array} 
\end{array}
$\\ \hline
\end{tabular}
\end{table}
In the Tables 4 to 8 we present the values of $P(m,k+1)$ for $1\leq m \leq 7$ and $2\leq k \leq 5$. In the Tables 9, 10 and 11, we present the number$(k+1)$-potent elements on the Rhombuses poset for some   values $m,n$ and $2\leq k \leq 5 $ and finally, in the Table 8 present the number $(k+1)$-potent elements in the $Y$ poset for some values of $n,m,l$ and $k=2$.

\begin{table}[H]
\caption{Numbers of $3$-potent elements of the  Star  Poset}
\centering
\begin{tabular}{|c||cl|}\hline
$m$ &  &  \multicolumn{1}{c}{$P(m,3)$}\\ [0.3ex]
\hline\hline
3 & & $12q^5+6q^4+8q^3+1$\\ 
4 &  &  $30q^8+20q^7+20q^6+10q^4+1$\\ 
5 &  & $30q^{12}+120q^{11}+50q^9+30q^8+12q^5+1 $\\ 
6 &  &$210q^{16}+140q^{15}+210q^{14}+70q^{12}+42q^{11}+42q^{10}+14q^6+1 $\\ 
7 &  & $560q^{21}+420q^{20}+560q^{19}+340q^{17}+70q^{16}+112q^{15}+56q^{13}+$  \\
 & & $56q^{12}+16q^{7}+1 $\\ 
 [1ex]
\hline
\end{tabular}
\label{}
\end{table}

\begin{table}[H]
\caption{Numbers $4$-potent elements  of the  Star  Poset}
\centering
\begin{tabular}{|c||cl|}\hline
$m$ &  &  \multicolumn{1}{c}{$P(m,4)$}\\ [0.4ex]
\hline\hline
4 & & $60q^9+90q^8+60q^7+30q^6+15q^4+1$\\ 
5 &  &  $270q^{13}+210x^{12}+360q^{11}+120q^9+45q^8+18q^5+1$\\ 
6 &  & $630q^{18}+1260q^{17}+630q^{16}+630q^{15}+630q^{14}+105q^{12}+126q^{11}+63q^{10}+$\\ 
 &  & $+21q^{6}+1$\\
7 &  & $630q^{24}+5040q^{23}+1680q^{22}+4200q^{21}+1260q^{20}+1680q^{19}+336q^{18}+$\\
& & $+1008q^{17}+105q^{16}+168q^{15}+168q^{13}+84q^{12}+24q^7+1 $\\
 [1ex]
\hline
\end{tabular}
\label{}
\end{table}

\begin{table}[H]
\caption{Numbers of $5$-potent elements of the  Star  Poset}
\centering
\begin{tabular}{|c||cl|}\hline
$m$ &  &  \multicolumn{1}{c}{$P(m,5)$}\\ [0.6ex]
\hline\hline
4 & & $24q^{10}+240q^9+180q^8+120q^7+40q^6+20q^4+1$\\ 
5 &  &  $360q^{14}+1080q^{13}+660q^{12}+720q^{11}+220q^9+60q^8+24q^5+1$\\ 
6 &  & $ 2520q^{19}+3360q^{18}+5040q^{17}+1260q^{16}+1680q^{15}+1260q^{14}+140q^{12}+$\\
  & & $+252q^{11}+84q^{10}+28q^6+1$\\ 
7 &  & $10080q^{25}\!+\! 15960q^{24}\!+\!20160q^{23}\!+\!8400x^{22}\! +\!134400q^{21}\!+\!2520q^{20}\!+\!3360q^{19}\!+\!$\\
 & & $+1344q^{18}+2016q^{17}+140q^{16}+224q^{15}+336q^{13}+112q^{12}+32q^{7}+1$\\ 
 [1ex]
\hline
\end{tabular}
\label{}
\end{table}

\begin{table}[H]
\caption{Numbers of $6$-potent elements  of the  Star  Poset}
\centering
\begin{tabular}{|c||cl|}\hline
$m$ &  & \multicolumn{1}{c}{$P(m,6)$}\\ [0.6ex]
\hline\hline
4 & & $120q^{10}+600q^{9}+300q^8+200q^7+50q^6+25q^4+1.$\\ 
5 &  &  $120q^{15}\!+\!1800q^{14}\!+\!2700q^{13}\!+\!1500q^{12}+1200q^{11}+350q^9+75q^8+30q^5\!+\!1.$\\
6 &  & $2520q^{20}+126000q^{19}+10500q^{18}+126000q^{17}+2100q^{16}+3500q^{15}$ \\   &  & $+2100q^{14}+175q^{12}+420q^{11}+105q^{10}+35q^6+1$. \\ 
7 &  & $25200q^{26}\!\!+\!57120q^{25}\!+\!73500q^{24}\!+\!50400q^{23}\!+\!25200q^{22}\!+\!30800q^{21}\!+\!4200q^{20}\!+\!$\\
& &  $+5600q^{19}\!+\!3360q^{18}\!+\!3360q^{17}\!+\!175q^{16}\!+\!280q^{15}\!+\!560q^{13}\!+\!140q^{12}\!+\!40q^7\!+\!1$\\[1ex]
\hline
\end{tabular}
\label{}
\end{table}


\begin{table}[H]
\caption{Numbers of $3$-potent elements in the Rhombuses Poset}
\centering
\begin{tabular}{|c|c||l|}\hline
$n$ & $m$   &  \multicolumn{1}{c}{  $k+1=3$}\\ [0.3ex]
\hline\hline
2 & 2  & $12q^{10}+174q^{9}+180q^8+192q^7+
96q^6+36q^5+12q^4+24q^3+3.$ \\
2 & 3   & $36q^{13}\!+\!432q^{12}\!+\!510q^{11}\!+\!528q^{10}\!+\!288q^9\!+\!186q^8\!+\!114q^7\!+\!48q^6\!+\!6q^5\!+\!$ \\ 
 &   &    $+24q^4+12q^3+3.$ \\
2 & 4 &   $36q^{17}\!+\!588q^{16}\!+\!1728q^{15}\!+\!1176q^{14}\!+\!1182q^{13}\!+\!762q^{12}\!+\!540q^{11}\!+\!144q^{10}\!+\!$ \\
 &    &   $+168q^9+162q^8+24q^7+30q^5+6q^4+12q^3+3$ \\   
2 & 5 &     $300q^{21}+2940q^{20}+3720q^{19}+4560q^{18}+2460q^{17}+1890q^{16}+1302q^{15}+$\\  
&  &   $+1254q^{14}+516q^{13}+210q^{12}+120q^{11}+186q^{10}+144q^9+24q^8+30q^6+$\\
 & &   $6q^5+6q^4+12q^3+3$\\
3 & 2 &  $36q^{13}\!+\!432q^{12}\!+\!510q^{11}\!+\!528q^{10}\!+\!288q^9\!+\!186q^8\!+\!114q^7\!+\!48q^6\!+\!6q^5\!+$\\
 &  &   $\!24q^4\!+12q^3+3$\\
3 & 3 &   $108q^{16}+1080q^{15}+1404q^{14}+1446q^{13}+912q^{12}+720q^{11}+456q^{10}+$ \\
 &  &    $144q^9+144q^8+60q^7+48q^6+36q^4+3$\\ 
 3& 4 &   $108q^{20}+1548q^{19}+4392q^{18}+3408q^{17}+3384q^{16}+2658q^{15}+1740q^{14}+$ \\
  &  &    $+792q^{13}+738q^{12}+468q^{11}+72q^{10}+204q^9+84q^8+18q^7+24q^6+$  \\
  & &  $24q^5+18q^4+3$\\
 3 & 5 &  $900q^{24}+7380q^{23}+10140q^{22}+12240q^{21}+7860q^{20}+6540q^{19}+$ \\
  &  &   $4968q^{18}+3618q^{17}+2034q^{16}+1080q^{15}+918q^{14}+444q^{13}+414q^{12}+$ \\ 
  & &  $132q^{11}+246q^{10}+24q^9+18q^8+18q^7+54q^6+18q^4+3$\\ [1ex]
\hline
\end{tabular}
\label{}
\end{table}

\begin{table}[H]
\caption{Numbers of $4$-potent elements in  the Rhombuses Poset}
\centering
\begin{tabular}{|c|c||l|}\hline
$n$ & $m$ &    \multicolumn{1}{c}{  $k+1=4$}\\ [0.3ex]
\hline\hline
3 & 2 &  $576q^{14}+3240q^{13}+4440q^{12}+3768q^{11}+2208q^{10}+1056q^9+564q^8+$ \\
& &  $336q^7+96q^6+24q^5+48q^4+24q^3+4$ \\
3 & 3 & $144q^{18}+4752q^{17}+12528q^{16}+16128q^{15}+13680q^{14}+7968q^{13}+$ \\
& &  $4752q^{12}\!+\!2880q^{11}\!+\!1536q^{10}\!+\!336q^9\!+\!468q^8\!+\!192q^7v+\!96q^6+72q^4+4$ \\
3& 4 &  $864q^{22}\!+\!20448q^{21}\!+\!42480q^{20}\!+\!59904q^{19}\!+\!50448q^{18}\!+\!33864q^{17}\!+ $\\ 
& &  $\!23328q^{16}\!+\!14952q^{15}\!+\!6480q^{14}\!+\!3744q^{13}\!+\!2940q^{12}\!+\!1488q^{11}\!+\!144q^{10}\!+\!$\\
& &   $648q^9+204q^8+72q^7+48q^6+48q^5+36q^4+4$\\
3 & 5 &  $2160q^{27}\!+\!51120*q^{26}\!+\!152640q^{25}+199440q^{24}\!+\!188880q^{23}\!+\!163320q^{22}\!+\!$\\
  &  &    $+114840q^{21}+66960q^{20}+39264q^{19}+32232q^{18}+17592q^{17}+7068q^{16}+$\\
  & &   $4440q^{15}+4068q^{14}+1896q^{13}+1236q^{12}+384q^{11}+672q^{10}+72q^9+$ \\
  & &  $72q^8+72q^7+108q^6+36q^4+4$\\ [1ex]
\hline
\end{tabular}
\label{}
\end{table}

\begin{table}[H]
\caption{Numbers of $5$-potent elements in the Rhombuses Poset}
\centering
\begin{tabular}{|c|c||l|}\hline
$n$ & $m$ &    \multicolumn{1}{c}{  $k+1=5$}\\ [0.3ex]
\hline\hline
3& 4 &  $12960q^{23}\!+\!140400q^{22}\!+\!375120q^{21}\!+\!436800q^{20}\!+\!415800q^{19}\!+\!239640q^{18}\!+\!$ \\
& &  $+154440q^{17}+86400q^{16}+48500q^{15}+17120q^{14}+12000q^{13}+7980q^{12}+$\\  
& &  $3400q^{11}+240q^{10}+1520q^9+400q^8+180q^7+80q^6+80q^5+60q^4+5$ \\
3 & 5 &  $97200q^{28}+802800q^{27}+1555200q^{26}+2202600q^{25}+1723200q^{24}+ $\\
& &  $1318800q^{23}+903600q^{22}+516000q^{21}+267360q^{20}+153720q^{19}+ $ \\
& &  $116880q^{18}+54180q^{17}+16780q^{16}+14240q^{15}+11820q^{14}+5440q^{13}+$ \\ 
& &  $2740q^{12}+840q^{11}+1420q^{10}+200q^9+180q^8+180q^7+180q^6+60q^4+5$\\ [1ex]
\hline
\end{tabular}
\label{}
\end{table}


\begin{table}[H]
\caption{Numbers of $3$-potent elementsin the $Y$ Poset }
\centering
\begin{tabular}{|c|c|c||l|}\hline\hline
$n$ & $m$ & $l$  &  \multicolumn{1}{c}{  $k+1=3$}\\ [0.3ex]
\hline
3 & 3 & 3 & $270q^{22}\!+\!1620q^{21}\!+\!4626q^{20}\!+\!2808q^{19}\!+\!3648q^{18}\!+\!2808q^{17}\!+\!972q^{16}\!+\!$ \\
 &   &   &  $1026q^{15}\!+\!1044q^{14}\!+\!216q^{13}\!+\!216q^{12}\!+\!180q^{11}\!+\!108q^{10}\!+\!48q^9\!+\!54q^8\!+\!$ \\
  & & & $36q^5+3$\\
3 & 3 & 4 &  $108q^{27}\!+\!1332q^{26}\!+\!7884q^{25}\!+\!9822q^{24}\!+\!11466q^{23}\!+\!7740q^{22}\!+\!7080q^{21}\!+\!$ \\
& & & $4014q^{20}+3906q^{19}+1656q^{18}+1620q^{17}+648q^{16}+846q^{15}+288q^{14}+$ \\
&&& $198q^{13}+102q^{12}+198q^{11}+36q^{10}+42q^9+18q^8+24q^6+18q^5+3$\\ 
3 & 3  & 5 &  $360q^{32}+3780q^{31}+21090q^{30}+27000q^{29}+33150q^{28}+20304q^{27}+$ \\
& & & $23580q^{26}+14010q^{25}+10446q^{24}+6048q^{23}+6672q^{22}+3888q^{21}+ $ \\ 
& & & $2034q^{20}+1392q^{19}+1242q^{18}+594q^{17}+594q^{16}+354q^{15}+108q^{14}+$ \\
 & & & $132q^{13}+240q^{12}+18q^{11}+18q^{10}+24q^9+18q^8+30q^7+18q^5+3$\\ [1ex]
\hline
\end{tabular}
\label{}
\end{table}

\end{document}